\documentclass{amsart}

\usepackage{graphicx} 
\usepackage{import}
\usepackage[mark=***]{sectionbreak}

\usepackage{amsthm}
\usepackage{amsmath}

\usepackage{txfonts} 

\usepackage{./Packages/abreviations}
\usepackage[Color=no]{./Packages/environments}

\usepackage{hyperref}
\hypersetup{
	colorlinks=true,
	linkcolor=blue,
	citecolor=blue,
	filecolor=black,
	urlcolor=blue
}

\title{The variety of Lie algebra representations}
\author{Bruna Mariana Braido da Silva Percinotti}
\date{March, 2026}

\begin{document}

%%% Heading %%%{

    \begin{abstract}
We study the affine variety $L_{n}(\mfr{g})$ of Lie algebra representations, the collection of all homomorphisms from an arbitrary $n$-dimensional Lie algebra into a fixed real semi-simple Lie algebra $\mfr{g}$. Using techniques from real Geometric Invariant Theory, we equip this variety with a natural moment map and associated energy functional arising from the action of the real reductive group $GL(n,\mbb{R}) \times \iner(\mfr{g})$.

We analyze the critical points of the energy functional and describe their structure. In particular, we prove that every semi-simple pair, that is representations of semi-simple Lie algebras, will globally minimize the energy in its orbit. As consequences, we obtain an elementary proof of the rigidity of semi-simple homomorphisms and derive a new proof of the Mostow theorem on the existence of compatible Cartan involutions for semi-simple subalgebras. Subsequent results concerning the structure of critical points of higher energy are also obtained.
	\end{abstract}

\maketitle
\tableofcontents

%%% --------- %%%}
%%% Main Body %%%{

\section{Introduction}
    % !TeX spellcheck = en_US

In what follows we introduce techniques from the real version of Geometric Invariant Theory (GIT) in the study of representations of real semi-simple Lie algebras. More explicitly, we take a semi-simple Lie algebra $\mfr{g}$ and consider $L_{n}(\mfr{g})$, the affine variety of all pairs $(\mu,\phi)$, where $\mu$ is a Lie bracket on $\mbb{R}^{n}$ and $\phi$ is a Lie homomorphism $(\mbb{R}^{n},\mu) \to \mfr{g}$.

We shall follow closely a trend of GIT applications initiated in \cite{lauret2002momentmapvarietylie} to study Complex Lie algebras and latter continued in \cite{gorodski2023momentmapvarietyjordan} to complex Jordan algebras. In both those articles the authors took the Kirwan-Ness energy functional, which is constructed from a natural group action producing a "base change" in each algebra operation, and considered its critical points lying in either the variety of Lie brackets or Jordan brackets, respectively. In this light, we shall also consider a appropriate group action on $L_{n}(\mfr{g})$ and an associated energy functional. 

For a semi-simple Lie algebra $\mfr{g}$ we now describe $L_{n}(\mfr{g})$. First, take the following vector space:

	\begin{equation*}
V_{n}(\mfr{g}) = \Lin\left( \extp^{2} \mbb{R}^{n},\mbb{R}^{n}\right) \oplus \Lin(\mbb{R}^{n},\mfr{g})
	\end{equation*}

\noindent So a arbitrary element of $V_{n}(\mfr{g})$ will be a pair $(\mu,\phi)$ where $\mu$ is a skew-symmetric bilinear map on $\mbb{R}^{n}$ with values on $\mbb{R}^{n}$ and $\phi$ is a linear map $\mbb{R}^{n} \to \mfr{g}$. For those pairs, we may consider the following system of polynomial equations

	\begin{equation}
	\begin{cases}
\mu(X,\mu(Y,Z)) = \mu(\mu(X,Y),Z) + \mu(Y,\mu(X,Z)) \\
\phi\mu(X,Y) = [\phi(X),\phi(Y)]
	\end{cases}
\qquad \forall X,Y,Z \in \mbb{R}^{n} 
	\end{equation}

\noindent which will define a affine variety

	\begin{equation*}
L_{n}(\mfr{g}) \coloneqq \left\{(\mu,\phi) \in V_{n}(\mfr{g}) \suchthat (\mu,\phi) \text{ satisfies \lastequation{0}}  
	\right\}
	\end{equation*}
	
\noindent In other words, to each pair $(\mu,\phi) \in L_{n}(\mfr{g})$ we obtain a $n$-dimensional Lie algebra $\mfr{h}_{\mu} \coloneqq (\mbb{R}^{n},\mu)$ and a Lie homomorphism $\phi: \mfr{h}_{\mu} \to \mfr{g}$. 

Readily, one can observe that the polynomial equations in \lastequation{0} are homogeneous, thus we also have the option of working with the projective variety  

	\begin{equation*}
\mcal{L}_{n}(\mfr{g}) \coloneqq \frac{L_{n}(\mfr{g}) \smallsetminus \{(0,0)\}}{\mbb{R}^{\times}} \subseteq \mbb{P}(V_{n}(\mfr{g})).
	\end{equation*}

\noindent In the following we work simultaneously with $L_{n}(\mfr{g})$ and $\mcal{L}_{n}(\mfr{g})$, depending on the convenience.

Additionally, on $V_{n}(\mfr{g})$, we have the natural linear action of the real reductive group

	\begin{equation*}
GC_{n}(\mfr{g}) \coloneqq GL(n,\mbb{R}) \times \iner(\mfr{g})
	\end{equation*}

\noindent So if $(g,h) \in GC_{n}(\mfr{g})$ and $(\mu,\phi) \in V_{n}(\mfr{g})$ we will have an new element of $V_{n}(\mfr{g})$ 

	\begin{equation*}
(g,h) \cdot (\mu,\phi) = (g \cdot \mu, h\phi g^{-1})
	\end{equation*}

\noindent where on the first entree we have the natural action of "base change" on the space of skew bilinear forms and on second entree we perform composition. 

\subsubsection{Remark:} Whenever possible, and the context is sufficient for understanding, we choose to abbreviate $V_{n}(\mfr{g})$, $L_{n}(\mfr{g})$ and $GC_{n}(\mfr{g})$ to the simpler notation $V_{n}$, $L_{n}$ and $GC_{n}$, respectively. \\

Observe that the equations in \lastequation{0} are preserved by the action of this group. Thus both varieties $L_{n}$ and $\mcal{L}_{n}$ are left invariant by $GC_{n}$. From this action, and an appropriate choice of inner product in $V_{n}$, we shall see in \S 2.2 how to construct the Kirwan-Ness energy functional $E: \mcal{L}_{n} \to \mbb{R}$. 

Usually, said functional $E$ is the main tool of the Kirwan-Ness stratification theorem for the null-cone (see \cite{bohm2017realgeometricinvarianttheory}). Concerning our results, however, we will be more interested on the properties of $E$ and its critical points, henceforth called critical pairs. The justification, besides their importance in the stratification itself, will be the structure results we shall obtain for those. 

After setting up the GIT machinery we shall use in \S 2, we will start by analyzing the minimal points of $E$ and their properties in \S 3. In this context the main applications we shall derive from the GIT framework we constructed will be directed at the semi-simple pairs $(\mu,\phi) \in L_{n}(\mfr{g})$, i.e., with $\mfr{h}_{\mu}$ semi-simple. This is so because of the main result for this section:

	\begin{theorem}
Let $(\mu:\phi) \in \mcal{L}_{n}(\mfr{g})$ be a semi-simple pair. Then its $GC_{n}(\mfr{g})$-orbit contains a global minimal for the energy functional.
	\end{theorem}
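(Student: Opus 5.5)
The plan is to use the standard real GIT characterization: in $\mcal{L}_{n}$, a point whose orbit is \emph{closed} in $V_{n}$ (and nonzero) is polystable. By the Kempf--Ness / Richardson--Slodowy theorem, its orbit then contains a zero of the moment map $m$. Such a zero is a global minimum of $E=\|m\|^{2}$, with $E=0$.

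If the energy is normalized so that $0$ is not attainable, for instance because the scalars in $GL(n,\mbb{R})$ act by a nontrivial character, the same argument is run for the action of the semisimple part (or the subgroup $SL$-type factor times $\iner(\mfr{g})$). That action yields a point where the moment map is a multiple of a fixed scalar element. This is still the absolute minimum of $E$ on $\mcal{L}_{n}$, by the usual lower bound $E\ge$ const for nonzero brackets. So the core task is to show that the orbit of a semi-simple pair $(\mu,\phi)$ under $G=SL^{\pm}(n,\mbb{R})\times\iner(\mfr{g})$ is closed in $V_{n}$.

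\textbf{Step 1 (closedness of the bracket part).} The $GL(n,\mbb{R})$-orbit of a semi-simple bracket $\mu$ is closed in the set of brackets of the same norm class, and $\mu$ is polystable. I would show this via the Hilbert--Mumford criterion. Take a one-parameter subgroup $\lambda(t)=(\exp tA,\exp t\,\ad X)$ with $A$ symmetric and $X\in\mfr{p}$, and suppose $\lambda(t)\cdot(\mu,\phi)$ has a limit as $t\to\infty$. We must show the limit lies in the orbit.

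Decompose $\mbb{R}^{n}$ into eigenspaces of $A$. A limit exists exactly when $\mu$ respects the induced filtration: $\mu(V_a,V_b)\subseteq\bigoplus_{c\le a+b}V_c$, with the analogous condition on $\phi$ relative to the $\ad X$-grading of $\mfr{g}$. Then $\mfr{h}_{\le 0}=\bigoplus_{a\le0}V_a$ is a subalgebra and $\mfr{h}_{<0}$ is an ideal in it. Hence $-A$ induces a filtration of $\mfr{h}_{\mu}$ compatible with the bracket, i.e.\ defines a parabolic-type subalgebra. For semi-simple $\mfr{h}_{\mu}$ the Killing form pairs $V_a$ with $V_{-a}$. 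From this one shows that the graded limit bracket $\mu_{0}=\lim$ is again semisimple, hence isomorphic to $\mu$. Indeed, the filtration comes from a derivation, since every filtration of a semisimple Lie algebra compatible with the bracket is induced by a semisimple derivation $\ad Y$ (Morozov-type argument), and $\mfr{h}$ is isomorphic to its associated graded. In that case $\lambda(t)$ acts, up to conjugation, as $\exp t\,\ad Y$, and the limit is a conjugate of $\mu$.

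\textbf{Step 2 (the homomorphism part).} The same $Y$ now gives the derivation $\phi\circ\ad Y\circ\phi^{-1}$ on $\phi(\mfr{h})$, which is a semisimple subalgebra of $\mfr{g}$. The compatibility of $\phi$ with the gradings by $A$ and $\ad X$ says that $\ad X$ and $\ad\phi(Y)$ agree on $\phi(\mfr{h})$ to leading order. The limit of $e^{t\ad X}\phi e^{-tA}$ is then $\phi$ composed with the projection to the $0$-eigenspace of $\ad(X-\phi(Y))$ on the centralizer directions. Since $\phi(\mfr{h})$ is reductive in $\mfr{g}$, i.e.\ $\mfr{g}$ is a completely reducible $\phi(\mfr{h})$-module, these pieces assemble to an element of $\iner(\mfr{g})\cdot\phi\cdot\iner(\mfr{h})$. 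Hence the limit point lies in $G\cdot(\mu,\phi)$, and by the real Hilbert--Mumford criterion the orbit is closed.

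\textbf{Main obstacle.} The hard step is Step 1's claim that a bracket-compatible filtration of a semisimple Lie algebra comes from a semisimple inner derivation, so that degenerations along one-parameter subgroups of semisimple brackets stay in the orbit. The first thing I would try is a cohomological route: Whitehead's lemmas $H^{1}=H^{2}=0$ give rigidity of $\mu$, so that nearby brackets are conjugate. That reduces the question to a neighborhood of $\mu$, where the Richardson--Slodowy local criterion applies. A similar $H^{1}(\mfr{h},\mfr{g})=0$ rigidity handles $\phi$.
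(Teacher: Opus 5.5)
Your reduction matches the paper's. The scalars of $GL(n,\mbb{R})$ act on all of $V_{n}$ by $c^{-1}$, so no nonzero $GC_{n}$-orbit is closed, and $E\geqslant\tfrac{1}{n}$ always (Lemma 2.4). The $E=0$ scenario of your first paragraph therefore never occurs. What must be shown is that the $SL(n,\mbb{R})\times\iner(\mfr{g})$-orbit of $(\mu,\phi)$ is closed (Lemma 3.1), via Hilbert--Mumford. But neither of your two steps is actually carried out.

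In Step 1 the decisive claim is that every trace-free bracket-compatible filtration of a semisimple algebra is induced by some $\ad_{Y}$. This is essentially the case $\phi=0$ of what you are proving (Lauret's theorem, Lemma 3.5), and you assert it without proof. Also, ``$\mu_{0}$ is semisimple, hence isomorphic to $\mu$'' is a non sequitur. This step is short once you use the Killing-form inner product of Lemma 3.5. First move $(\mu,\phi)$ within its orbit so that $M(\mu)=-\tfrac{\|\mu\|^{2}}{n}\id$. Now let $A$ be symmetric and trace-free with $e^{tA}\cdot\mu$ convergent, and write $\|e^{tA}\cdot\mu\|^{2}=\sum_{i}c_{i}e^{2t\lambda_{i}}$ with $c_{i}\geqslant0$. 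Boundedness as $t\to\infty$ gives $\lambda_{i}\leqslant0$ whenever $c_{i}>0$. The derivative at $t=0$ is a multiple of $\langle M(\mu),A\rangle=0$, so $\sum_{i}c_{i}\lambda_{i}=0$. Hence $A\cdot\mu=0$, i.e.\ $A=\ad^{\mu}_{Y}$ with $Y$ semisimple.

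The genuine gap is Step 2, which carries the real content of the theorem. Nothing there explains why $\psi=\lim_{t\to\infty}e^{t\ad_{X}}\phi\,e^{-t\ad^{\mu}_{Y}}$ should be $\iner(\mfr{g})$-conjugate to $\phi$ rather than some other homomorphism $\mfr{h}_{\mu}\to\mfr{g}$. ``Agree to leading order'' and ``these pieces assemble'' are not arguments, and complete reducibility of $\mfr{g}$ under $\phi(\mfr{h}_{\mu})$ says nothing about the limit.

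The missing idea, which is the paper's, is to restrict to a Cartan subalgebra $\mfr{t}\ni Y$. There $e^{-t\ad^{\mu}_{Y}}$ acts trivially and $\phi(\mfr{t})$ is toral. Closedness of such orbits (Theorem 3.2, Lemma 2.10) then gives $\psi|_{\mfr{t}}=\phi|_{\mfr{t}}$; conjugacy would suffice. Hence $\ad\circ\phi^{\mbb{C}}$ and $\ad\circ\psi^{\mbb{C}}$ have the same weights and are equivalent, so $\dim\mfr{z}_{\mfr{g}}(\ima\phi)=\dim\mfr{z}_{\mfr{g}}(\ima\psi)$. By Lemma 3.6 the stabilizers, and hence the orbits, have equal dimension. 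A closed orbit in the closure of an orbit of the same dimension is that orbit.

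Your cohomological fallback can be made to work, but not as phrased. Rigidity near $\mu$ or $\phi$ is useless, because the limit point is not near them; rigidity must be applied at the limit. Concretely, $e^{t\ad_{X}}\phi\,e^{-t\ad^{\mu}_{Y}}=e^{t\ad_{X}}e^{-t\ad_{\phi(Y)}}\phi$ stays in $\iner(\mfr{g})\cdot\phi$ inside the variety $\mathrm{Hom}(\mfr{h}_{\mu},\mfr{g})$. The limit $\psi$ is again a homomorphism. Since $H^{1}(\mfr{h}_{\mu},\mfr{g})=0$, the orbit $\iner(\mfr{g})\cdot\psi$ is open in that variety (Richardson's rigidity theorem). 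So the path enters it, and $\phi\in\iner(\mfr{g})\cdot\psi$. This is a valid alternative, but it uses exactly the cohomology that the paper's weight-and-dimension argument is designed to avoid.
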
 

The idea main idea of the proof will be to first establish it for the especial case $\phi = 0$, which was done previously in \cite{lauret2002momentmapvarietylie}, and from this we will conclude that somewhere in set $\overline{GC_{n} \cdot (\mu,\phi)}$ the energy functional does attain its global minimal. And from this it will be a simple matter to use the Hilbert-Mumford criterion and a representation theory argument to prove that this actually happens inside the orbit.

After proving \theo{1.1} we shall start to see the consequences for a pair to be critical. In particular, we shall obtain some well know structural results for homomorphisms between semi-simple Lie algebras:

	\begin{theorem}
The $GC_{n}(\mfr{g})$-orbit of a semi-simple pair is open in $L_{n}(\mfr{g})$. 	
	\end{theorem}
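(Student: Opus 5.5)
We want to show that for a semi-simple pair $(\mu,\phi)$, the orbit $GC_{n}\cdot(\mu,\phi)$ contains a neighbourhood of $(\mu,\phi)$ in $L_{n}$. The plan is a tangent-space (deformation) argument that combines Whitehead's lemmas with the Nijenhuis--Richardson openness criterion. Throughout, $\iner(\mfr{g})$ acts through $\Ad$, so its Lie algebra is $\mfr{g}$ acting by $\ad$.

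\textbf{Step 1: the Zariski tangent space.} Differentiating the defining equations of $L_{n}$ at $(\mu,\phi)$, a tangent vector $(\lambda,\psi)\in V_{n}$ must satisfy two conditions. First, $\lambda\in Z^{2}(\mfr{h}_{\mu},\mfr{h}_{\mu})$ for the adjoint representation. Second,
\begin{equation*}
\psi\mu(X,Y)+\phi\lambda(X,Y)=[\psi X,\phi Y]+[\phi X,\psi Y].
\end{equation*}
Here $\mfr{g}$ is viewed as an $\mfr{h}_{\mu}$-module via $\ad\circ\phi$.

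\textbf{Step 2: the orbit tangent space.} The tangent space to the orbit is spanned by the pairs
\begin{equation*}
\bigl(-\delta A,\ \ad_{Z}\circ\phi-\phi\circ A\bigr),\qquad A\in\mfr{gl}_{n},\ Z\in\mfr{g},
\end{equation*}
where $\delta A(X,Y)=A\mu(X,Y)-\mu(AX,Y)-\mu(X,AY)$ (up to sign conventions).

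\textbf{Step 3: the tangent spaces coincide.} Take a tangent vector $(\lambda,\psi)$.
\begin{itemize}
\item By Whitehead's second lemma, $H^{2}(\mfr{h}_{\mu},\mfr{h}_{\mu})=0$, so $\lambda=-\delta A$ for some $A$.
\item Subtract the orbit vector attached to $(A,0)$. The remaining second component $\psi'=\psi+\phi A$ satisfies the linearized homomorphism equation with $\lambda=0$, i.e.\ $\psi'([X,Y])=[\psi' X,\phi Y]+[\phi X,\psi' Y]$. So $\psi'$ is a $1$-cocycle in $Z^{1}(\mfr{h}_{\mu},\mfr{g})$.
\item By Whitehead's first lemma, $H^{1}(\mfr{h}_{\mu},\mfr{g})=0$, so $\psi'(X)=[\phi X,W]=\ad_{-W}\phi(X)$ for some $W\in\mfr{g}$.
\end{itemize}
Hence $(\lambda,\psi)$ lies in the orbit tangent space, and $T_{(\mu,\phi)}L_{n}=T_{(\mu,\phi)}(GC_{n}\cdot(\mu,\phi))$ as Zariski tangent spaces.

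\textbf{Step 4: from tangent spaces to openness.} This step needs care, and I expect it to be the main obstacle. Equality of the Zariski tangent space with the orbit tangent space shows that $(\mu,\phi)$ is a smooth point of $L_{n}$ and that the orbit has the same local dimension as $L_{n}$ there. To see this, note that the orbit is a smooth immersed submanifold contained in $L_{n}$, while the Zariski tangent space bounds the local dimension of $L_{n}$ from above. At such a point, $L_{n}$ is a real-analytic manifold near $(\mu,\phi)$ of dimension equal to that of the orbit. Since the orbit is an immersed submanifold of full dimension inside it, the orbit contains a neighbourhood of $(\mu,\phi)$ by the inverse function theorem.

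Translating by the group then gives openness of the whole orbit. I would cite Nijenhuis--Richardson for the argument "scheme-theoretic smoothness from $T_{x}$ bound plus orbit inclusion". The delicate point is the real-algebraic setting, where the reduced variety and the scheme must be compared. This is handled by the inequality
\begin{equation*}
\dim \text{orbit}\le\dim_{x}L_{n}\le\dim T_{x}L_{n}=\dim \text{orbit}.
\end{equation*}

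The paper presumably intends an alternative route via Theorem 1.1, using the closed orbit and minimality of the energy. The cohomological route above is self-contained, however, and I would present it, possibly remarking that the vanishing of $H^{1}$ and $H^{2}$ is the only input needed.
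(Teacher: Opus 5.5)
Your proof is correct, and it takes a genuinely different route from the paper's.

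\textbf{The paper's route.} It derives Theorem~1.2 from Theorem~1.1. Semi-simple pairs have closed $SC_{n}$-orbits, shown via energy minimization, the Hilbert--Mumford criterion and a weight comparison on a Cartan subalgebra. Hence they have closed $GC_{n}$-orbits in $\mcal{L}_{n}$. The set $X$ of semi-simple pairs is an open semi-algebraic set with finitely many components. From this the paper concludes that $X$ contains finitely many orbits, each of which is therefore open.

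\textbf{Your route.} You run the classical Nijenhuis--Richardson deformation argument. The vanishing of $H^{2}(\mfr{h}_{\mu},\mfr{h}_{\mu})$ and $H^{1}(\mfr{h}_{\mu},\mfr{g})$ identifies the tangent space of the scheme cut out by the defining equations with the orbit tangent space. Openness then follows by a local inverse-function argument.

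\textbf{What each buys.} The paper's route gives the extra global fact that these orbits are closed, and it avoids cohomology, which is its stated aim. Yours is local and self-contained and also shows that $L_{n}$ is smooth at semi-simple pairs. It handles non-injective $\phi$ at no extra cost; that is the reason the paper gives for Richardson's subalgebra result not applying. It also needs no count of orbits. This matters because ``all orbits in $X$ are closed'' does not by itself bound the number of orbits, since closed orbits can come in continuous families. So the paper's finiteness step needs further input, and your local computation supplies it.

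\textbf{Two small points.}

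In Step 3, the vector you subtract must be the genuine derivative of the orbit map. For $g\cdot\mu=g\mu(g^{-1}\cdot,g^{-1}\cdot)$ and $\phi\mapsto h\phi g^{-1}$, the orbit vector of $(A,0)$ is $(\delta A,-\phi A)$. By contrast, $(-\delta A,-\phi A)$ violates your linearized homomorphism equation by a multiple of $\phi\circ\delta A$. Since $B^{2}$ is a linear space, just write $\lambda=\delta A$; the resulting $\psi'=\psi+\phi A$ is the same.

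In Step 4, you can bypass the smoothness of $L_{n}$ and the scheme-versus-variety comparison. Let $d=\dim T_{x}$, and choose $\dim V_{n}-d$ scalar components of the defining equations with independent differentials at $x$. Near $x$ they cut out a $d$-dimensional submanifold $M\supseteq L_{n}$. Now take a small ball in a complement of the stabilizer subalgebra and map it by $Z\mapsto e^{Z}\cdot x$. Its image lies in the orbit and is a $d$-dimensional embedded submanifold of $M$, hence open in $M$. It therefore contains $L_{n}\cap U$ for some neighbourhood $U$ of $x$.
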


The idea behind the proof will be to apply \theo{1.1} to conclude that on the closure of any orbit there can be at most one semi-simple pair. Then it will be a simple matter to exhibit the orbit of a semi-simple pair as the intersection of $L_{n}(\mfr{g})$ with a open set. 

The pairs with open orbits are also called rigid, for a more standard nomenclature. We note that \theo{1.2} is already know from the more general theorem proved in \cite{bams/1183527432} where rigidity is studied by means of a cohomology theory defined from a graded Lie algebra, see also \cite{bams/1183526023,bams/1183528646} for more specific details. Also, we note that the results in \cite{Richardson1967ART} do not directly apply, since the dimension of the image is not fixed. Our proof will have the advantage of being elementary and make no use of cohomology results.

The second consequence we shall obtain from \theo{1.1} is the Mostow’s theorem regarding the existence of compatible Cartan involutions for semi-simple Lie subalgebras of semi-simple Lie algebras: 

	\begin{theorem}{}\cite{MR69829}
Let $\mfr{g}$ be semi-simple Lie algebra and $\mfr{h} \leqslant \mfr{g}$ any semi-simple subalgebra. Then for any Cartan involution $\sigma':\mfr{h} \to \mfr{h}$ is the restriction of a Cartan involution $\sigma: \mfr{g} \to \mfr{g}$.
	\end{theorem}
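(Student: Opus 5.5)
The plan is to obtain Mostow's theorem from \theo{1.1} by reading off what it means for a pair to be a critical (minimal) point of the energy functional.

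First fix the GIT data as in \S 2. On $\mfr{g}$ choose a Cartan involution $\theta$ and the inner product $B_\theta(X,Y) = -B(X,\theta Y)$. On $\mbb{R}^n$ take the standard inner product. These induce an inner product on $V_n$ for which $O(n)\times K$ acts orthogonally, and a Cartan decomposition of $GC_n$ whose symmetric part consists of pairs $(A,D)$ with $A\in\mathrm{Sym}(n)$ and $D\in\mathrm{ad}(\mfr{p})$. The moment map $m(\mu,\phi)$ then has two components. The $\mfr{gl}_n$-component is $M_\mu$ plus a term $\phi^t\phi$ (up to sign and normalization). The $\mfr{g}$-component is, up to a constant, $\sum_i [\phi(e_i),\theta\phi(e_i)]$, projected to $\mfr{p}$.

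Now let $\mfr{h}\leqslant\mfr{g}$ be semi-simple with $n=\dim\mfr{h}$, and let $\phi$ be the inclusion, viewed via a basis as a semi-simple pair $(\mu,\phi)$. By \theo{1.1} there is $(g,h)\in GC_n$ such that $(\mu',\phi')=(g,h)\cdot(\mu,\phi)$ is a global minimum of $E$, hence a critical pair. The key step is to show that at a global minimum the moment map takes a special form. I expect that minimality forces $m(\mu',\phi')$ to be a multiple of the identity in the $\mfr{gl}_n$-part and zero in the $\mfr{g}$-part. This should follow by the usual argument: a global minimum of the Kirwan–Ness functional is the closest-to-zero value of the normalized moment map, and the identity direction is the only central direction. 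The vanishing of the $\mfr{g}$-component says $\sum_i[\phi'(e_i),\theta\phi'(e_i)]=0$. Since $\iner(\mfr{g})$ acts, this is the Kempf–Ness-type condition that $\phi'(\mfr{h})$ is a $\theta$-"balanced" configuration. Combined with $M_{\mu'}+c\,\phi'^t\phi' = \lambda I$, I would try to show that the image $\mfr{h}'=h(\mfr{h})$ is $\theta$-stable.

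Concretely, I would take the derivative of $E$ at the critical point in the direction of $\mathrm{ad}(Z)$ for $Z\in\mfr{p}$. I would also use that $\phi'$ is an isomorphism onto its image, so $\mfr{gl}_n$-variations correspond to variations inside $\mathrm{Der}(\mfr{h}')$. The goal is to deduce that the $B_\theta$-orthogonal projection of $\theta\mfr{h}'$ onto $\mfr{h}'^{\perp}$ vanishes. Once $\theta(\mfr{h}')=\mfr{h}'$, the restriction $\theta|_{\mfr{h}'}$ is an involution with $-B(\cdot,\theta\cdot)$ positive definite on $\mfr{h}'$. By semi-simplicity, the Killing form of $\mfr{h}'$ and the restriction of $B$ are related on each simple ideal, so $\theta|_{\mfr{h}'}$ is a Cartan involution of $\mfr{h}'$. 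Transporting back via $h^{-1}$, the Cartan involution $h^{-1}\theta h$ of $\mfr{g}$ restricts to a Cartan involution of $\mfr{h}$.

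Finally, to reach an arbitrary $\sigma'$, use that all Cartan involutions of $\mfr{h}$ are conjugate under $\iner(\mfr{h})$: $\sigma'=k\,\tau\,k^{-1}$ with $k=e^{\mathrm{ad}X}$ and $X\in\mfr{h}$. The automorphism $e^{\mathrm{ad}_{\mfr{g}}X}\in\iner(\mfr{g})$ extends $k$, so $\sigma = e^{\mathrm{ad}X}(h^{-1}\theta h)e^{-\mathrm{ad}X}$ is a Cartan involution of $\mfr{g}$ restricting to $\sigma'$.

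The main obstacle is the middle step: extracting $\theta$-stability of the image from the criticality equations. If the direct moment map identity is insufficient, I would instead argue that the stabilizer of a minimal point is self-adjoint (a standard real GIT fact). The stabilizer contains the image of $\mathrm{Aut}$-type elements $(k|_{\mfr{h}},k)$ for $k\in\iner(\mfr{h})$ extended to $\mfr{g}$, so its Lie algebra contains $\{(\mathrm{ad}_{\mu'}X,\mathrm{ad}\,\phi'(X))\}\cong\mfr{h}'$. Self-adjointness, meaning closure under $\theta$ on the $\mfr{g}$-factor, then yields $\theta\mfr{h}'\subseteq\mfr{h}'$ directly.
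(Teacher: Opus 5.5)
Your outer steps are sound: pass to a minimal point via \theo{1.1}, show $\theta$ preserves $\mfr{h}'=\phi'(\mfr{h})$, restrict, then conjugate by $e^{\ad X}$ with $X\in\mfr{h}$, extended to $\mfr{g}$. The gap is the middle step. Your primary route (differentiating $E$ along $\ad Z$) is never carried out, and your fallback does not give what you claim. The $\tilde{\theta}$-invariance of the stabilizer at a point with $m=-\tfrac{1}{n}(\id,0)$ is exactly the last assertion of \lemm{3.1}. However, the stabilizer is not $\mfr{inn}(\mu',\phi')\cong\mfr{h}'$: by \lemm{3.6} it is $\mfr{inn}(\mu',\phi')\oplus\bigl(0\oplus\mfr{z}_{\mfr{g}}(\mfr{h}')\bigr)$. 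So from $(\ad^{\mu'}_X,\phi'(X))\in\der(\mu',\phi')$ you only get $(-\ad^{\mu'*}_X,\theta\phi'(X))=(\ad^{\mu'}_Y,\phi'(Y))+(0,z)$ with $z\in\mfr{z}_{\mfr{g}}(\mfr{h}')$. That is, you obtain only $\theta\mfr{h}'\subseteq\mfr{h}'\oplus\mfr{z}_{\mfr{g}}(\mfr{h}')$, and $\theta$-stability of a larger algebra containing $\mfr{h}'$ does not by itself give $\theta$-stability of $\mfr{h}'$. This inclusion is precisely the paper's $\ima(\theta\phi-\phi\theta')\leqslant\mfr{z}_{\mfr{g}}(\ima\phi)$, and it is where the paper's real work starts. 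The paper then:
\begin{enumerate}
\item reduces to simple factors with $\phi$ an isometry (\prop{3.4});
\item applies the lemma on $\sum_i[T_i,T_i^{*}]=0$ to $\rho=\ad\circ\phi$ (using $u_{\mu,\phi}=0$), so that $\ker\phi^{*}$ is a complementary submodule;
\item deduces $\mfr{z}_{\mfr{g}}(\ima\phi)\leqslant\ker\phi^{*}$, since invariant vectors must project to zero in $\ima\phi\cong\mfr{h}_{\mu}$;
\item concludes $\theta'=\phi^{*}\theta\phi$ and kills the centralizer component with a norm computation.
\end{enumerate}

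Within your framework the missing step can be closed more cheaply. Let $\mfr{s}=\mfr{h}'\oplus\mfr{z}_{\mfr{g}}(\mfr{h}')$. This is the $\mfr{g}$-projection of $\der(\mu',\phi')$, so it is $\theta$-stable. Every simple ideal $\mfr{a}$ of $\mfr{h}'$ is an ideal of $\mfr{s}$, since $[\mfr{z}_{\mfr{g}}(\mfr{h}'),\mfr{a}]=0$, and hence so is $\theta\mfr{a}$. If $\theta\mfr{a}\neq\mfr{a}$, simplicity of $\mfr{a}$ gives $\mfr{a}\cap\theta\mfr{a}=0$. Then $[\mfr{a},\theta\mfr{a}]=0$ and $B_{\mfr{g}}(\mfr{a},\theta\mfr{a})=B_{\mfr{g}}([\mfr{a},\mfr{a}],\theta\mfr{a})=0$, contradicting $-B_{\mfr{g}}(X,\theta X)>0$ for $0\neq X\in\mfr{a}$. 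Hence $\theta\mfr{h}'=\mfr{h}'$, which avoids the isometry normalization and the module argument entirely.

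Two smaller repairs are also needed. First, $m=-\tfrac{1}{n}(\id,0)$ is not a general property of orbit minima; it is equivalent to $D_{\mu,\phi}=0$, which fails for many critical pairs. Here it holds because the proof of \theo{1.1} shows the $SC_{n}$-orbit is closed, so cite \lemm{3.1} rather than the ``only central direction'' heuristic. Second, ``the Killing forms are related on each simple ideal'' is inaccurate for simple ideals carrying a complex structure. Instead, note that $\ad X|_{\mfr{h}'}$ is symmetric for $X\in\mfr{h}'\cap\mfr{p}$ and skew for $X\in\mfr{h}'\cap\mfr{k}$. So the Killing form of $\mfr{h}'$ is positive definite on $\mfr{h}'\cap\mfr{p}$ and negative definite on $\mfr{h}'\cap\mfr{k}$, which makes $\theta|_{\mfr{h}'}$ a Cartan involution.
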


The proof will again be elementary and will rely on \theo{1.1}. The idea is for a given semi-simple critical pair $(\mu,\phi)$ to show the existence of a pair of Cartan involutions $\sigma: \mfr{g} \to \mfr{g}$ and $\sigma':\mfr{h}_{\mu} \to \mfr{h}_{\mu}$ for which $\sigma\phi=\phi\sigma'$. Subsequently, we shall use the action of the group $GC_{n}$ to extend this to any pair in the same orbit and to show that $\sigma'$ can be any pre-selected Cartan involution.

We note that Mostow's theorem is logically equivalent to Karpelevich's theorem (see \cite{zbMATH03084006}) which asserts that a isometric action of semi-simple connected Lie group on a symmetric space of non-compact type has at least one totally geodesic orbit. This version of \theo{1.2} was most recently proved in \cite{Di_Scala_2009} by purely geometric techniques, in contrast to our purely algebraic proof.

In order to obtain the aforementioned applications, only the global minima for the energy functional and their properties were studied. Naturally, this brings about questions concerning the remaining critical pairs: Will they have any interesting properties? 

For those questions, we present a miscellanea of results in section \S4 where we explore more deeply the general structure of those high energy critical pairs. Those structural properties will be direct generalizations of the Lie algebra case done by J. Lauret in \cite{lauret2002momentmapvarietylie}.  

Indeed, we shall see that the study of the higher energy critical pairs is essentially reduced to the nilpotent case (see \theo{4.2} and \theo{4.5}). Additionally, we also prove the existence of compatible gradations for the critical pairs:

	\begin{theorem}		
Let $(\mu:\phi) \in \mcal{L}_{n}(\mfr{g})$ be a critical point of the energy functional. Then we may find Lie algebra gradations $\mfr{h}_{\mu} = \sum_{i \geqslant 0} \mfr{h}^{i}$ and $\mfr{g} = \sum_{i \in \mbb{Z}} \mfr{g}^{i}$ which are compatible with the homomorphism $\phi$ in the sense that $\phi[\mfr{h}^{i}] \leqslant \mfr{g}^{i}$ for $i$ any non-negative integer.
	\end{theorem}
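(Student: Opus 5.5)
Following Lauret's treatment of the Lie algebra case, the plan is to read the gradations off the critical point equation. The moment map $m:\mcal{L}_{n}\to \mfr{gl}(n,\mbb{R})\oplus\mfr{p}$ is taken with respect to the action of $GC_{n}$, where $\mfr{p}$ is the symmetric part of a Cartan decomposition of $\iner(\mfr{g})$'s Lie algebra, identified with $\mfr{g}$. A pair $(\mu:\phi)$ is critical for $E=\|m\|^{2}$ exactly when the gradient $\delta_{m(\mu,\phi)}(\mu,\phi)$ is parallel to $(\mu,\phi)$. This gives
\begin{equation*}
m(\mu,\phi) = cI + D
\end{equation*}
with $D=(D_{1},D_{2})$, where $D_{1}$ is a symmetric endomorphism of $\mbb{R}^{n}$ and $D_{2}=\ad(Z)$ with $Z\in\mfr{p}$. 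The derivative of the action along $(D_{1},D_{2})$ annihilates $(\mu,\phi)$, and the scalar part $cI$ only rescales. Concretely, I expect two relations. First, $D_{1}$ is a symmetric derivation of $\mu$. Second, the intertwining relation
\begin{equation*}
\ad(Z)\,\phi - \phi\, D_{1} = 0,
\end{equation*}
possibly with a rescaling. The first is exactly Lauret's statement for brackets. The second is the new condition coming from the $\Lin(\mbb{R}^{n},\mfr{g})$ summand, where $(g,h)$ acts by $h\phi g^{-1}$.

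The first key step is to fix the normalisation of the critical point equation so that it is exactly $(D_{1},\ad Z)$, and not $cI$ plus something, that kills $(\mu,\phi)$. The scalar $cI\in\mfr{gl}(n)$ scales $\mu$ and $\phi$ with different weights: $\mu$ by $-c$ and $\phi$ by $-c$ for $\phi g^{-1}$. The weights appear to agree, so the correction is a single scalar that I will absorb into $D_{1}$. To get nonnegative gradings, I would use Lauret's sign argument with $\tr D_{1}^{2}$ and $\tr m\,D_{1}$, as in the Lie algebra case, to show that the eigenvalues of $D_{1}$ can be taken nonnegative, possibly after shifting by the scalar.

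Next I would show that the eigenvalues of $D_{1}$ are rational, so that after rescaling they become integers. Lauret obtains this from the fact that $D_{1}$, after the shift by $cI$, is the moment map value on the whole orbit structure, and its eigenvalues solve a linear system with integer coefficients. The conditions come from $\mu$: $\mu(\mfr{h}^{a},\mfr{h}^{b})\subseteq\mfr{h}^{a+b}$, and nonzero structure constants force linear relations among the eigenvalues. A cleaner route is to say that the set of derivations is an algebraic Lie algebra, so the semisimple part of its $\mbb{Q}$-closure contains a rational element with the same kernel pattern. For $\ad Z$ I must simultaneously replace $Z$ by an element with rational eigenvalues, while keeping the intertwining relation. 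For this I would work inside the algebraic Lie algebra
\begin{equation*}
\{(A,B)\in\Der(\mu)\oplus\Der(\mfr{g}) : B\phi=\phi A\}
\end{equation*}
and take a rational semisimple element of a maximal split torus containing $(D_{1},\ad Z)$, whose eigenvalues on the relevant eigenspaces keep the same sign pattern. I expect this rationality/positivity step to be the main obstacle, because $\ad Z$ on $\mfr{g}$ may have negative eigenvalues, and these must be allowed in $\mfr{g}$ while nonnegativity is forced on $\mfr{h}_{\mu}$.

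Finally, let $\mfr{h}^{i}$ and $\mfr{g}^{i}$ be the $i$-eigenspaces of the rational integer-valued pair $(A,B)$. Since $A$ and $B$ are diagonalizable derivations, these eigenspaces are Lie algebra gradations. The relation $B\phi=\phi A$ gives $\phi(\mfr{h}^{i})\subseteq\mfr{g}^{i}$, and nonnegativity of $A$ gives $\mfr{h}_{\mu}=\sum_{i\geq0}\mfr{h}^{i}$.
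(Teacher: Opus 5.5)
\textbf{Verdict: there is a genuine gap, in the nonnegativity of $D_1$. Your rationality step is sound and takes a different route from the paper.}

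Your reduction is the right one. With the quadratic normalization, your $(D_{1},Z)$ is the paper's $(D,u)_{\mu,\phi}=M(\mu,\phi)+k_{\mu,\phi}(\id,0)$, and criticality says exactly that this is a pair derivation: $D_{1}$ is a derivation of $\mu$ and $\ad_{Z}\phi=\phi D_{1}$. Once you have a diagonalizable pair derivation with integer eigenvalues that are nonnegative on $\mfr{h}_{\mu}$, the eigenspace gradings finish the proof.

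The gap is the nonnegativity of $D_{1}$. Trace identities such as $\langle M(\mu,\phi),(D_{1},Z)\rangle=0$ collapse to the single scalar relation $k_{\mu,\phi}\Tr D_{\mu,\phi}=\|(D,u)_{\mu,\phi}\|^{2}$ (Lemma 2.4). That controls $\Tr D_{1}$, not the individual eigenvalues. A shift by a scalar is not available either. $D_{1}+s\,\id$ is a derivation of $\mu$ only if $s\mu=0$, and it satisfies $\ad_{Z}\phi=\phi(D_{1}+s\,\id)$ only if $s\phi=0$. Since $\mfr{g}$ is semisimple, it has no scalar derivation to compensate.

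The missing idea is the eigenvector argument of Proposition 2.5, which runs through the inner pair derivations:
\begin{itemize}
\item Let $D_{1}X=\lambda X$. The derivation property gives $[D_{1},\ad^{\mu}_{X}]=\lambda\ad^{\mu}_{X}$, and the intertwining relation gives $[Z,\phi(X)]=\lambda\phi(X)$. So $A=\ad^{\mu,\phi}_{X}=(\ad^{\mu}_{X},\phi(X))$ satisfies $[(D_{1},Z),A]=\lambda A$.
\item Pair this with $A$, and use that $(\id,0)$ is orthogonal to the commutator $[\ad^{\mu}_{X},(\ad^{\mu}_{X})^{*}]$. By Lemma 2.3(2) you get $\lambda\|A\|^{2}=\langle M(\mu,\phi),[\tilde{\theta}A,A]\rangle=\|\tilde{\theta}A\cdot(\mu,\phi)\|^{2}\geq 0$. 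The $\mfr{g}$-component contributes $\langle Z,[\theta\phi(X),\phi(X)]\rangle$, so this genuinely uses the pair structure and is not a verbatim copy of Lauret's bracket case.
\item When $A=0$, i.e.\ $X$ is central and $\phi(X)=0$, you need a separate estimate. There $\lambda\|X\|^{2}=\langle M(\mu)X,X\rangle+k_{\mu,\phi}\|X\|^{2}$, with $\langle M(\mu)X,X\rangle=\tfrac{1}{2}\sum_{i,j}\langle X,\mu(e_{i},e_{j})\rangle^{2}\geq 0$. Hence $\lambda\geq k_{\mu,\phi}>0$.
\end{itemize}

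Your rationality step takes a different route from the paper's. The paper proves the stronger Proposition 4.3. It passes to the nilradical (Theorem 4.2), uses the adapted basis of Lemma 4.1, and writes $\tfrac{1}{k}(D,\ad_{u})=I-P(I)$ with $I$ and $P(I)$ rational. Its grading is therefore literally the eigenspace decomposition of the moment-map derivation. You only need \emph{some} rational pair derivation with the same sign pattern, and that suffices for the theorem as stated.

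Make this precise with replicas rather than a maximal split torus, because a nearby rational point of such a torus can turn zero eigenvalues of $D_{1}$ negative. Let $\Lambda\subset\mbb{R}$ be the $\mbb{Q}$-span of the eigenvalues of $D_{1}$ and $\ad_{Z}$. Let $f\colon\Lambda\to\mbb{Q}$ be the $\mbb{Q}$-linear map obtained by moving each element of a $\mbb{Q}$-basis of $\Lambda$ to a nearby rational number. Write $V_{\lambda}$ for the eigenspaces of $D_{1}$ and $W_{\lambda}$ for those of $\ad_{Z}$. The pair-derivation conditions say only that
$\mu(V_{\lambda},V_{\lambda'})\subseteq V_{\lambda+\lambda'}$, $[W_{\lambda},W_{\lambda'}]\subseteq W_{\lambda+\lambda'}$ and $\phi(V_{\lambda})\subseteq W_{\lambda}$.
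So the operators acting by $f(\lambda)$ on $V_{\lambda}$ and $W_{\lambda}$ again form a pair derivation. Moreover $f(0)=0$, and for $f$ close enough to the inclusion, signs are preserved.

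This avoids Lemma 4.1 and Theorem 4.2 entirely. The cost is that you do not recover rationality of $\tfrac{1}{k}D_{\mu,\phi}$ itself. In your approach rationality is the easy part, and positivity is where the real work lies.
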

	
\subsubsection*{Acknowledgements:} It was a pleasure to have Claudio Gorodski for the many fruitful discussions we had. Additionally, the author is thankful to Christoph B\"{o}hm for the helpful comments and suggestions regarding the writing and structure of this paper.

\setcounter{section}{1}
\section{The moment map for the variety of isometric actions}
    % !TeX spellcheck = en_US

%{

In this section we shall introduce the techniques from GIT we shall use on the following sections. To do so, fix an semi-simple Lie algebra $\mfr{g}$. Then the group

	\begin{equation*}
GC_{n}(\mfr{g}) \coloneqq GL(\mbb{R}^{n}) \times \iner(\mfr{g})
	\end{equation*}
	
\noindent is real reductive in the sense of \cite{bohm2017realgeometricinvarianttheory} and we may use the results therein to study the action of $GC_{n}$ on $L_{n}$. But first, in order to properly setup the GIT machinery, we shall make two remarks concerning the action of $GC_{n}$.
	
Firstly, there is the construction of the infinitesimal action of $GC_{n}$ done by its Lie algebra $\mfr{gc}_{n} = \mfr{gc}_{n}(\mfr{g})$. Indeed, we start by making use of the semi-simplicity of $\mfr{g}$ to obtain the following identification:
	
	\begin{equation*}
\mfr{gc}_{n}(\mfr{g}) \cong \mfr{gl}(\mbb{R}^{n}) \oplus \mfr{g},
	\end{equation*}
	
\noindent Then, by a simple process of differentiation, we see that an element $(A,v) \in \mfr{gc}_{n}$ acts on $V_{n}$ according to the rule
	
	\begin{equation*}
(A,v) \cdot (\mu,\phi) = (A\cdot\mu,\ad_{v}\phi-\phi A),
	\end{equation*}
	
\noindent where $A\cdot\mu$ is the bilinear $\mbb{R}^{n} \times \mbb{R}^{n} \to \mbb{R}^{n}$ given by
	
	\begin{equation*}
A\cdot\mu(X,Y) = \mu(AX,Y) + \mu(X,AY) - A\mu(X,Y).
	\end{equation*}
	
Secondly, there is the matter of the isotropy subgroups (and Lie subalgebras) of a pair in $L_{n}$. For those we make the very suggestive notation and let $\aut(\mu,\phi)$ be the isotropy subgroup of $(\mu,\phi) \in L_{n}(\mfr{g})$ (or its projectivization) and name its elements automorphisms of the pair $(\mu,\phi)$, or simply pair automorphisms. Similarly, we let $\der(\mu,\phi)$ be the Lie algebra of $\aut(\mu,\phi)$ and name its elements derivations of the pair $(\mu,\phi)$, or simply pair derivations.
	
The justification for such names relies on the observation that $\der(\mu,\phi)$ often behaves as the Lie algebra of derivations of a Lie algebra. For instance, similar to the case of Lie algebras, a pair $(\mu,\phi)$ will have some control over its isotropy. Indeed, we have a characterization of the pair derivations
	
	\begin{equation*}
(\delta,v) \in \der(\mu,\phi) \iff 
	\begin{cases}
\delta \in \der(\mu) \\
\ad_{v}\phi = \phi\delta
	\end{cases}.
	\end{equation*}
	
\noindent From which we can construct, as in the Lie algebra case, a Lie algebra homomorphism
	
	\begin{equation*}
\ad^{\mu,\phi}: \mfr{h}_{\mu} \to \der(\mu,\phi),
	\end{equation*}
	
\noindent which is given by $X \mapsto (\ad^{\mu}_{X},\phi(X))$. (Recall that $\mfr{h}_{\mu}$ denotes the Lie algebra $(\mbb{R}^{n},\mu)$). The image of $\ad^{\mu,\phi}$ shall be denoted by $\mfr{inn}(\mu,\phi)$ and its elements are the inner pair derivations. 
	
%}
\subsection{The moment map and the energy functional}

We now are ready to enter the territory of real GIT. As we mentioned in the introduction, the moment map and the energy functional are defined up to some choice of background inner product on $V_{n}$. And the right way to make such choice will be to fix an arbitrary inner product on $\mbb{R}^{n}$ and a Cartan involution $\theta$ on $\mfr{g}$. Then on $\mfr{g}$ the following inner product can be considered 

	\begin{equation*}
\langle X,Y \rangle \coloneqq - B_{\mfr{g}}(\theta X,Y),
	\end{equation*}

\noindent where $B_{\mfr{g}}$ is the Killing form of $\mfr{g}$. Note that from the Cartan involution $\theta$ its eigenspaces $\mfr{k}$ and $\mfr{p}$ for the eigenvalues 1 and -1, respectively, will form a Cartan decomposition for $\mfr{g}$. 

Then the inner product we constructed on $\mfr{g}$ together with the one in $\mbb{R}^{n}$ induces inner products on any other space constructed out of those vector spaces, in particular we have one in $V_{n}$ and one in the Lie algebra $\mfr{gc}_{n}$. 

The inner product we constructed on $\mfr{gc}_{n}$ is also induced from an $\ad$-invariant bilinear form $\Tilde{\beta}$ and a Cartan involution $\Tilde{\theta}$. Those are given by

	\begin{equation}
	\begin{cases}
\Tilde{\beta}((A,v),(B,w)) \coloneqq \Tr(AB) + B_{\mfr{g}}(v,w) \\
\Tilde{\theta}(A,v) \coloneqq  (-A^{*},\theta(v)).
	\end{cases}
	\end{equation}

\noindent where $A^{*}$ is the adjoint relative to the inner product of $\mbb{R}^{n}$. Then we obtain a Cartan decomposition $\mfr{gc}_{n}(\mfr{g}) = \Tilde{\mfr{k}} \oplus \Tilde{\mfr{p}}$ where

	\begin{equation*}
\Tilde{\mfr{k}} = \mfr{so}(n) \oplus \mfr{k} \qquad\qquad \Tilde{\mfr{p}} = Symm(n) \oplus \mfr{p}
	\end{equation*}

\noindent And this decomposition is compatible with the inner product on $V_{n}$ in the sense that the action of $(A,v) \in \mfr{gc}_{n}$ on $V_{n}(\mfr{g})$ will be (skew-) symmetrical if, and only if, $(A,v)$ is (in $\Tilde{\mfr{k}}$) in $\Tilde{\mfr{p}}$. Also, we note that on the Lie group level we have a global Cartan decomposition

	\begin{equation*}
GC_{n}(\mfr{g}) = OC_{n}(\mfr{g}) \exp[\mfr{p}],
	\end{equation*}

\noindent where $OC_{n}(\mfr{g})$ is the maximal compact subgroup $O(\mbb{R}^{n}) \times (O(\mfr{g}) \cap \iner(\mfr{g}))$. 

\subsubsection{Remark:} In order to not overload our notations, we shall make the convention that all those inner products constructed out of those in $\mfr{g}$ and $\mbb{R}^{n}$ should be denoted by $\langle -,- \rangle$ and have the context be sufficient to understand on which vector space we are. \\

And after this long preamble we can start introducing the GIT tools we have at our disposal. Those will be the moment map $m: \mcal{L}_{n} \to \Tilde{\mfr{p}}$ which is implicitly defined by the relation

	\begin{equation}
\langle m(\mu:\phi), (A,v) \rangle = \frac{\langle (A,v) (\mu,\phi), (\mu,\phi) \rangle}{\|(\mu,\phi)\|^{2}}.
	\end{equation}

\noindent And the energy functional $E: \mcal{L}_{n} \to \mbb{R}$ which is given by

	\begin{equation*}
E(\mu:\phi) \coloneqq \|m(\mu:\phi)\|^{2} 
	\end{equation*}

\noindent And although the functional $E$ is in general not a Morse-Bott function it is possible to use it to construct a stratification of $\mcal{L}_{n}$, we shall briefly do so in section 2.2. On this account, the critical points of $E$ are of particular interest. 

	\begin{definition}
A pair $(\mu:\phi)$ in $\mcal{L}_{n}(\mfr{g})$ (or $L_{n}(\mfr{g})$) is called critical if it is (it projectivization is) a critical point of $E$. When this is the case we say its orbit in $\mcal{L}_{n}(\mfr{g})$ (or $L_{n}(\mfr{g})$) is distinguished. Any pair contained in a distinguished orbit is also called distinguished.
	\end{definition}
	
\subsubsection{Remark:} The notion of a critical pair is completely dependent on the choice of inner product in $\mbb{R}^{n}$ and Cartan involution on $\mfr{g}$. However, the notion of a distinguished pair does not depend on those choices. The reason is that the group $GC_{n}$ acts transitively on the set of inner products of $\mbb{R}^{n}$ and Cartan involutions of $\mfr{g}$ and different choices for those will produce moment maps which are related under the adjoint representation $GC_{n} \to \mfr{gl}(\mfr{gc}_{n})$. \\ 

Concerning the distinguished orbits and the critical pairs we have the following characterization:

	\begin{proposition}
Let $(\mu:\phi) \in \mcal{L}_{n}(\mfr{g)}$. Then its orbit will be distinguished if, and only if, the restriction of $E$ to its orbit attains its minimal value at some point $(\nu:\psi)$. Furthermore, if $(\mu:\phi)$ is distinguished then the critical pairs in its orbit are composed of a single $OC_{n}(\mfr{g})$-orbit.
	
	\begin{proof}
See \textbf{Theorem 1.3} and \textbf{Corollary 9.4} of \cite{bohm2017realgeometricinvarianttheory}.
	\end{proof}
	\end{proposition}

To systematically search and study the critical pairs we need a more practical characterization of those. From Lemma 7.2 in \cite{bohm2017realgeometricinvarianttheory} we know what is the gradient of the energy:

	\begin{equation*}
\nabla E(\mu:\phi) = 4 \left( E(\mu:\phi)(\id,0) + m(\mu:\phi)\right) \cdot (\mu:\phi)
	\end{equation*}

\noindent Thus the critical pairs can be characterized as solutions to the following equation:

	\begin{equation*}
m(\mu:\phi) \cdot (\mu,\phi) = E(\mu:\phi)(\mu,\phi)
	\end{equation*}

As we shall learn, it is a lot easier to do calculations with the cone $L_{n}$ than with the projective variety $\mcal{L}_{n}$. For this reason we define a "quadratic" version for the moment map and the energy. Those will be the map $M: L_{n} \to \mfr{p}$ given implicitly by 

	\begin{equation*}
\langle M(\mu,\phi), (A,v) \rangle = \langle (A,v)\cdot (\mu,\phi), (\mu,\phi) \rangle
	\end{equation*}

\noindent and the quantity

	\begin{equation*}
k_{\mu,\phi} = \frac{\|M(\mu,\phi)\|^{2}}{\|(\mu,\phi)\|^{2}}
	\end{equation*}

\noindent Then both are quadratic in $(\mu,\phi)$ and are related to the moment map and energy by means of

	\begin{equation*}
m(\mu:\phi) = \frac{M(\mu,\phi)}{\|(\mu,\phi)\|^{2}} \qquad \text{ and } \qquad E(\mu:\phi) = \frac{k_{\mu,\phi}}{\|(\mu,\phi)\|^{2}}
	\end{equation*}

\noindent We also have a object that resembles the gradient of $E$:

	\begin{equation*}
(D,u)_{\mu,\phi} \coloneqq M(\mu,\phi) + k_{\mu,\phi}(\id,0).
	\end{equation*}

\noindent Indeed,

	\begin{equation*}
\nabla E(\mu:\phi) = \frac{4}{\|(\mu,\phi)\|^{2}} (D,u) \cdot 	(\mu:\phi)
	\end{equation*}

\noindent And we obtain another characterization which states that a pair $(\mu,\phi)$ will be critical if, and only if, 

	\begin{equation*}
(D,u)_{\mu,\phi} \in \der(\mu,\phi).
	\end{equation*}

\subsubsection{Remark:} In the future, we shall use the notation $D_{\mu,\phi}$ and $u_{\mu,\phi}$ in place of $(D,u)_{\mu,\phi}$ when necessary to talk about those objects in individuality. Also, we shall omit subscripts altogether whenever the context becomes sufficient for understanding. \\

Following those definitions we now present some basic properties of the moment map and the energy functional. In especial we shall obtain more explicit formulas for those.

	\begin{lemma}
Let $(\mu,\phi) \in L_{n}(\mfr{g})$ and $(\delta,v) \in \der(\mu,\phi)$. Then the following holds:
	
	\begin{enumerate}
		
		\item $\langle M(\mu,\phi), (\delta,v) \rangle = 0$.
		
		\item $\langle M(\mu,\phi), [(-\delta^{*},\theta v),(\delta,v)] \rangle \geqslant 0$ and equality holds if, and only if, $(-\delta^{*},\theta v)$ is a pair derivation for $(\mu,\phi)$.
		
	\end{enumerate}
	
	\begin{proof}
		
	\begin{enumerate}
			
		\item From the definition of $\der(\mu,\phi)$ we see that $(\delta,v)(\mu,\phi) = 0$ and, from the definition of $M(\mu,\phi)$ we see that
			
			\begin{equation*}
		\langle M(\mu,\phi), (\delta,v) \rangle = \langle (\delta,v)(\mu,\phi),(\mu,\phi) \rangle = 0.
			\end{equation*}
			
		\noindent As claimed.
			
		\item Again, using the definition of $M(\mu,\phi)$ and the previous item we see that
			
			\begin{align*}
		\langle M(\mu,\phi), [(-\delta^{*},\theta v),(\delta,v)] \rangle & = \langle [(-\delta^{*},\theta v),(\delta,v)](\mu,\phi),(\mu,\phi) \rangle \\
		& = \langle (-\delta^{*},\theta v)(\delta,v)(\mu,\phi),(\mu,\phi) \rangle \\
		& - \langle (\delta,v)(-\delta^{*},\theta v)(\mu,\phi),(\mu,\phi) \rangle \\
		& = \|(-\delta^{*},\theta v)(\mu,\phi)\|^{2}
			\end{align*}
			
		\noindent And both the claims follows.
			
	\end{enumerate}
	\end{proof}
	\end{lemma}

Also, we shall interested on the eigenvalues of the symmetric map $D_{\mu,\phi}$. Already on the case of a generic pair we can prove its trace is non-negative: 

	\begin{lemma}
For any $(\mu,\phi) \in L_{n}(\mfr{g})$ we have
	
	\begin{equation*}
k_{\mu,\phi} \Tr D_{\mu,\phi} = \|(D,u)_{\mu,\phi}\|^{2}.
	\end{equation*}
	
\noindent In particular $\Tr D_{\mu,\phi} \geqslant 0$ and equality holds if, and only if, $(D,u)_{\mu,\phi} = 0$.
	
	\begin{proof}
First, observe that
		
	\begin{align*}
\|(\mu,\phi)\|^{2} & = - \langle M(\mu,\phi),(\id,0) \rangle \\
& = \langle k_{\mu,\phi}(\id,0) - (D,u)_{\mu,\phi},(\id,0) \rangle \\
& = nk_{\mu,\phi} - \langle D_{\mu,\phi}, \id \rangle.
	\end{align*}
		
\noindent Hence
		
	\begin{equation}
nk_{\mu,\phi} = \|(\mu,\phi)\|^{2} + \langle D_{\mu,\phi}, \id \rangle
	\end{equation}
		
\noindent So if we calculate
		
	\begin{align*}
\|M(\mu,\phi)\|^{2} & = \langle (D,u)_{\mu,\phi} - k_{\mu,\phi}(\id,0), (D,u)_{\mu,\phi} - k_{\mu,\phi}(\id,0)\rangle \\
& = \|(D,u)_{\mu,\phi}\|^{2}-2k_{\mu,\phi}\langle D_{\mu,\phi}, \id \rangle + k^{2}_{(\mu,\phi)}\langle \id, \id \rangle \\
& = \|(D,u)_{\mu,\phi}\|^{2}-k_{\mu,\phi}\langle D_{\mu,\phi}, \id \rangle + k_{\mu,\phi}\|(\mu,\phi)\|^{2} \\
& = \|(D,u)_{\mu,\phi}\|^{2}-k_{\mu,\phi}\langle D_{\mu,\phi}, \id \rangle + \|M(\mu,\phi)\|^{2} \\
	\end{align*}
		
\noindent we conclude that
		
	\begin{equation*}
k_{\mu,\phi}\langle D_{\mu,\phi}, \id \rangle = \|(D,u)_{\mu,\phi}\|^{2}
	\end{equation*}
		
\noindent to obtain the result of the lemma.
	\end{proof}
	\end{lemma}

More can be said on the case the pair $(\mu,\phi)$ is critical, for not only does the symmetric map $D_{\mu,\phi}$ has non-negative trace but all of its eigenvalues are non-negative: 

	\begin{proposition}
Let $(\mu:\phi) \in \mcal{L}_{n}(\mfr{g})$ be critical. Then the derivation $D_{\mu,\phi}$ is positive semi-definite.
	
	\begin{proof}
The linear endomorphism $D = D_{\mu,\phi}$ is diagonalizable, since it is symmetric, and we can take $X \neq 0$ with $D(X) = cX$. Then 
		
	\begin{equation*}
c \ad_{X}^{\mu} = \ad_{D(X)}^{\mu} = [D,\ad^{\mu}_{X}],
	\end{equation*}
		
\noindent and
		
	\begin{equation*}
c\phi(X) = \phi D(X) = [u_{\mu,\phi},\phi(X)] 
	\end{equation*}
		
\noindent Thus
		
	\begin{equation*}
c\ad^{\mu,\phi}_{X} = \left[ (D,u)_{\mu,\phi},\ad^{\mu,\phi}_{X} \right]
	\end{equation*}
		
\noindent From \lemm{2.3} we see that
		
	\begin{align*}
c \left\| \ad^{\mu,\phi}_{X} \right\|^{2} & = \langle [D,\ad_{X}^{\mu}], \ad_{X}^{\mu} \rangle + \langle [u_{\mu,\phi},\phi(X)],\phi(X) \rangle \\
& = \langle D, [\ad^{\mu}_{X},\ad_{X}^{\mu*}] \rangle + \langle u_{\mu,\phi},[\theta\phi(X),\phi(X)]\rangle \\
& = \langle (D,u_{\mu,\phi}), ([\ad^{\mu}_{X},\ad_{X}^{\mu*}],[\theta\phi(X),\phi(X)]) \rangle \\
& = \langle M(\mu,\phi), [(\ad^{\mu}_{X},\phi(X)),(-\ad_{X}^{\mu*},\theta\phi(X))] \rangle \\
& \geqslant 0. 
	\end{align*}

\noindent Hence $c \geqslant 0$ provided $\ad_{X}^{\mu,\phi} \neq 0$. Otherwise, $\ad_{X}^{\mu} = 0$ and $\phi(X) = 0$ and we obtain that 
		
	\begin{equation*}
c \geqslant k_{\mu,\phi} > 0, 
	\end{equation*}
		
\noindent since 
		
	\begin{equation*}
0 \leqslant \langle M(\mu)(X),X \rangle = \langle D(X) -k_{\mu,\phi} X + \phi^{*}\phi(X),X \rangle = (c-k_{\mu,\phi}) \langle X,X \rangle
	\end{equation*}
		
\noindent (note the formula for $\langle M(\mu)(X),X \rangle$ in \prop{2.6}).
	\end{proof}
	\end{proposition}

This result which states that $D_{\mu,\phi}$ is positive semi-definite, when $(\mu,\phi)$ is critical, will latter be improved in \prop{4.3} when we prove that all its eigenvalues are non-negative rational numbers, up to a common multiplicative constant. 

Also, a explicit formula for the energy and moment map can be given.

	\begin{proposition}
Let $(\mu:\phi) \in \mcal{L}_{n}(\mfr{g})$. Then its energy and (quadratic) moment map are, respectively, 
	
	\begin{equation*}
E(\mu:\phi) = \frac{1}{n}\left( 1 + \frac{\Tr D_{\mu,\phi}}{\|(\mu,\phi)\|^{2}} \right) \qquad \text{and} \qquad M(\mu,\phi) = \left( M(\mu) - \phi^{*}\phi, \sum_{i} [\theta \phi(e_{i}),\phi(e_{i})]\right)
	\end{equation*}
	
\noindent Here $e_{1}, \dots, e_{n}$ is an orthogonal base for $\mbb{R}^{n}$ and $M(\mu)$ is the (quadratic) moment map for the Lie bracket $\mu$ given by the formula
	
	\begin{equation*}
\langle M(\mu)X,Y \rangle = \frac{1}{2}\sum_{i,j} \langle X,\mu(e_{i},e_{j}) \rangle\langle \mu(e_{i},e_{j}),Y \rangle - \sum_{i} \langle \mu(X,e_{i}),\mu(Y,e_{i}) \rangle
	\end{equation*}
	
	\begin{proof}
For the formula for $E$ note that from \lastequation{0} 
		
	\begin{equation*}
E(\mu:\phi) = \frac{k_{\mu,\phi}}{\|(\mu,\phi)\|^{2}} = \frac{1}{n}\left( 1 + \frac{\Tr D_{\mu,\phi}}{\|(\mu,\phi)\|^{2}} \right)
	\end{equation*}
		
And for the formula of $M(\mu,\phi)$ we note that
		
	\begin{align*}
\langle M(\mu,\phi), (A,v) \rangle & = \langle (A,v)\cdot (\mu,\phi), (\mu,\phi) \rangle \\
& = \langle A \cdot \mu,\mu \rangle + \langle (A,v)\phi,\phi \rangle \\
& = \langle M(\mu),A \rangle + \langle (A,v)\phi,\phi \rangle 
	\end{align*}
		
\noindent Then the calculation for the explicit formula of $M(\mu)$ can be found in the Proposition 3.2 from \cite{lauret2002momentmapvarietylie}, so working with the remaining term we obtain
		
	\begin{align*}
\langle (A,v)\phi,\phi \rangle & = \langle \ad_{v}\phi - \phi A, \phi \rangle \\
& = \sum_{i=1}^{n} \langle [v,\phi(e_{i})], \phi(e_{i}) \rangle - \langle A,\phi^{*}\phi \rangle \\
& = \sum_{i=1}^{n} \langle v, [\theta\phi(e_{i}),\phi(e_{i})] \rangle - \langle A,\phi^{*}\phi \rangle \\
& = \left\langle (A,v), \left(-\phi^{*}\phi,\sum_{i=1}^{n}[\theta\phi(e_{i}),\phi(e_{i})] \right)\right\rangle
	\end{align*}
		
\noindent The formula follows.
	\end{proof}
	\end{proposition}
	
\subsubsection{Remark:} Following this description of the moment map we are able to find an important class of critical pairs. Indeed, every pair obtained from the inclusion $\mfr{p} \hookrightarrow \mfr{g}$, where $\mfr{p}$ is the radical of a parabolic subalgebra of $\mfr{g}$, will be critical (assuming the inclusion preserves the inner product). This follows from the study of the moment map of such algebras done in \cite{tamaru2007parabolicsubgroupssemisimplelie}. Also, it follows from the results in \S 4 that both the parabolic subalgebra and its nilradical are critical.  

\subsection{Real Geometric Invariant Theory}

We end this section with a brief presentation of the main theorems of Real GIT that will be used. Since we are only interested in the action of $GC_{n}$ on the variety $\mcal{L}_{n}$ we shall tailor those results to this particular situation. Nevertheless, the interested reader should feel encouraged to read \cite{bohm2017realgeometricinvarianttheory} for a self-contained proof and discussion of real GIT for those with a background in differential geometry. Also, for the complex case, we mention the books \cite{mumford1994geometric,kirwanCohomologyQuotientsSymplectic1984} and L. Ness paper on the stratification \cite{nessStratificationNullCone1984}. 

For the time being, fix a real reductive subgroup $R$ of $GC_{n}$, whose Lie algebra is $\mfr{r}$, which is compatible with its Cartan decomposition in the sense that

	\begin{equation*}
R = \left( R \cap OC_{n} \right) \exp[\mfr{r}\cap \Tilde{\mfr{p}}]
	\end{equation*}

Concerning the action of $R$ on $L_{n}$ (and $\mcal{L}_{n}$) we make the following definition:

	\begin{definition}
Let $R \leqslant GC_{n}(\mfr{g})$ be a closed subgroup. Then we say a nonzero pair $\xi \in L_{n}(\mfr{g})$ is $R$-unstable, $R$-semi-stable, or $R$-poly-stable if $\overline{R \cdot \xi}$ contains 0, does not contain 0, or equals the orbit $R \cdot \xi$, respectively. A similar definition is made for points in $\mcal{L}_{n}(\mfr{g})$ by first lifting it to $L_{n}(\mfr{g})$ and then observing the stability of the resulting point. 
	\end{definition}

As it is the case of $GC_{n}$, the action of the group $R$ on $\mcal{L}_{n}$ will produce its own moment map $m_{R}: \mcal{L}_{n} \to \mfr{r} \cap \Tilde{\mfr{p}}$ which is given by the same implicit formula used before in \eq{2.1}. It is not hard to see that $m_{R}$ is also given by the composition of $m$ with the orthogonal projection $\mfr{gc}_{n} \to \mfr{r}$. The Kempf-Ness theorem characterizes the $R$-poly-stable points in $L_{n}$ in terms of the moment map $m_{R}$.

	\begin{theorem}[Kempf-Ness]
A point $\xi \in L_{n}(\mfr{g})$. Then the following are equivalent:
	
	\begin{enumerate}
		
		\item The point $\xi$ is $R$-poly-stable.
		
		\item On the orbit $R\cdot\xi$ there is a point $\xi'$ with $m_{R}(\xi') = 0$.
		
		\item On the orbit $R\cdot\xi$ there is a point $\xi'$ with $\|\xi'\| \leqslant \|g\xi\|$ for every $g \in R$.
		
		\item On the closure of any $R$-orbit it contains an unique closed $R$-orbit. 
		
	\end{enumerate}
	
	\begin{proof}
See \textbf{Theorem 1.1} of \cite{bohm2017realgeometricinvarianttheory}.
	\end{proof}
	\end{theorem}

The second theorem from real GIT we present is the Hilbert-Mumford Criterion. In practice, it reduces the study of degenerations and stability to the problem of understanding the action of the one parameter subgroups of $R$. 

	\begin{theorem}[Hilbert-Mumford Criterion]
For each point $\xi \in L_{n}(\mfr{g})$ and each $\xi' \in \overline{R \cdot \xi}$ there is a $g \in R$ and an $\alpha \in \mfr{r} \cap \Tilde{p}$ such that
	
	\begin{equation*}
\lim_{t\to\infty} e^{t\alpha} \xi = g\xi'
	\end{equation*}
	
	\begin{proof}
See \textbf{Lemma 6.3} of \cite{bohm2017realgeometricinvarianttheory}.
	\end{proof}
	\end{theorem}

From those theorems we can extract a small result which will used in the next section: 

	\begin{lemma}
Suppose $\xi \in L_{n}(\mfr{g})$ is $R$-poly-stable point and let $\alpha \in \mfr{r} \cap \mfr{p}$ and $g \in R$ be such that
	
	\begin{equation*} 
\lim_{t\to\infty}e^{t\alpha} \xi = g \xi.
	\end{equation*}
	
\noindent Then we are able to conclude that $\alpha \in \der(\xi)$ and $g \in \aut(\xi)$.
	
	\begin{proof}
By the Kempf-Ness theorem there is a $h \in R$ for which $h\xi$ has minimal norm in $R\xi$. By defining $\alpha' = \Ad_{h}(\alpha)$ we see 
		
	\begin{equation*}
\lim_{t\to\infty}e^{t\alpha'} (h\xi) = hg \xi.
	\end{equation*}
		
\noindent The convexity of the function $t \mapsto e^{t\alpha'} (h\xi)$, see Lemma 5.1 and Corollary 5.2 of \cite{bohm2017realgeometricinvarianttheory}, implies that $\|h\xi\| \geqslant \|hg \xi\|$. But since $h\xi$ has minimal norm in its $R$-orbit we are lead to the conclusion that $\|h\xi\| = \|hg \xi\|$. Again by the same Corollary 5.2 we obtain that $h\xi = hg \xi$. Thus $g \in \aut(\xi)$ and for every real $s$ 
		
	\begin{equation*}
e^{s\alpha}\xi = \lim_{t\to\infty} e^{(t+s)\alpha}\xi = \xi
	\end{equation*}
		
\noindent so $\alpha \in \der(\xi)$.
	\end{proof}   
	\end{lemma}

Finally, the last result we mention is the Kirwan-Ness Stratification Theorem. The main idea behind this theorem is to identify, for each of its points, a direction $\beta \in \mfr{p}$ "most responsible" for its instability, and then allocate all the points with the same direction of "most instability" into what will be the strata $\mcal{S}_{\beta}$. To do so, we describe the points in $\mcal{S}_{\beta}$ as those which are semi-stable under the group $R_{\beta}$ which is obtained by "removing" the unstable direction $\beta$.

Fix $\beta \in \mfr{p}$ and denote by $\beta^{+}$ the vector $\beta + \|\beta\|^{2}(\id,0)$. Then we consider following two subsets of our projective variety:

	\begin{equation*}
V_{\beta}^{+} = \left\{ \xi \in \mcal{L}_{n} \suchthat \langle m(\xi), \beta^{+} \rangle \geqslant 0 \right\} \qquad \text{and} \qquad V_{\beta}^{0} = \left\{\xi \in \mcal{L}_{n} \suchthat \langle m(\xi), \beta^{+} \rangle = 0 \right\}
	\end{equation*}

\noindent Then one can prove that for each non-zero point $\Tilde{\xi} \in \xi$, with $\xi \in V_{\beta}^{+}$, the limit 

	\begin{equation*}
\lim_{t\to\infty} \exp\left(-t\beta^{+}\right) \Tilde{\xi}
	\end{equation*}

\noindent will converge to a non-zero element of $L_{n}$ whose generated line in $\mcal{L}_{n}$, call it $p_{\beta}(\xi)$, will be a point in $V_{\beta}^{0}$. So this will produce a map

	\begin{equation*}
p_{\beta}: V_{\beta}^{+} \to V_{\beta}^{0}
	\end{equation*}

On the other hand, let $Z_{\beta}$ be the centralizer in $GC_{n}$ of $\beta$, and $\mfr{z}_{\beta}$ its Lie algebra. Then we define the following (reductive) subgroup: 

	\begin{equation*}
R_{\beta} \coloneqq \left( Z_{\beta} \cap OC_{n}\right)\exp\left[ \mfr{p} \cap \mfr{z}_{\beta} \cap \beta^{\perp} \right]
	\end{equation*} 

\noindent And the set of $R_{\beta}$-semi-stable points 

	\begin{equation*}
U_{\beta}^{0} = \left\{ \xi \in V_{\beta}^{0} \suchthat \xi \text{ is $R_{\beta}$-semi-stable } \right\}
	\end{equation*}

Finally, associated to $\beta$ we will consider the following subset of $\mcal{L}_{n}(\mfr{g})$: 

	\begin{equation*}
\mcal{S}_{\beta} \coloneqq OC_{n} \cdot p_{\beta}^{-1} \left[ U_{\beta}^{0} \right] 
	\end{equation*}

Save for few many cases, most choices of $\beta$ will produce an empty $\mcal{S}_{\beta}$, those that do not are characterized by the energy functional. Denote by $\mcal{C} \subset \mcal{L}_{n}$ the set of critical points of the functional $E$. Then one can prove that $m[\mcal{C}]$ is the union of finitely many $OC_{n}$-orbits and that $\mcal{S}_{\beta} \neq \varnothing$ if, and only if, $\beta \in m[\mcal{C}]$. Even more, it can be proved that two given vectors $\beta,\beta' \in m[\mcal{C}]$ are in the same $OC_{n}$-orbit if, and only if, both have the same norm and this is the case if, and only if, $\mcal{S}_{\beta} = \mcal{S}_{\beta'}$. For this reason we are able to use the simpler notation $\mcal{S}_{\|\beta\|^{2}}$ in place of $\mcal{S}_{\beta}$. The following theorem further classifies the relationship between the stratification and the energy functional:

	\begin{theorem}[Kirwan-Ness Stratification]
The set $\mcal{B}$ of critical values of the energy functional $E: \mcal{L}_{n}(\mfr{g}) \to \mbb{R}$ is finite. Furthermore, the collection of sets $\{\mcal{S}_{c} \suchthat c \in \mcal{B}\}$ possess the following properties: 
	
	\begin{enumerate}
		
		\item $\mcal{L}_{n}(\mfr{g}) = \bigsqcup_{c \in \mcal{B}} \mcal{S}_{c}$ (disjoint union).
		
		\item $\overline{\mcal{S}_{c}} = \bigsqcup_{c' \geqslant c} \mcal{S}_{c'}$
		
		\item Each strata $\mcal{S}_{c}$ is a $GC_{n}(\mfr{g})$-invariant and a intersection of $\mcal{L}_{n}(\mfr{g})$ with a smooth manifold.
		
		\item A point $\xi \in \mcal{L}_{n}(\mfr{g})$ is in $\mcal{S}_{c}$ if, and only if, the integral line of $-\nabla E$ starting at $\xi$ converges to a critical point $\xi_{\infty} \in \mcal{C}$ with $E(\xi_{\infty}) = c$.
		
	\end{enumerate}
	
	\begin{proof}
See \textbf{Theorem 1.2} of \cite{bohm2017realgeometricinvarianttheory}.
	\end{proof}
	\end{theorem}

\setcounter{section}{2}
\section{The stratum of lowest energy}
     % !TeX spellcheck = en_US

%{
	
The energy $E$ is a continuous function defined on the compact space $\mcal{L}_{n}$. Thus we know that $E$ attains its global minimum in at least one pair. In this section we shall study those critical pairs and the associated strata in great detail to obtain a (almost) complete understand of them. As a application we shall obtain the rigidity of semi-simple subalgebras (\theo{1.2}) and Mostow's Theorem (\theo{1.3}).

Recall from \prop{2.6} the formula for the energy of a generic pair in $\mcal{L}_{n}$:

    \begin{equation}
E(\mu:\phi) = \frac{1}{n} \left( 1 + \frac{\Tr D_{\mu,\phi}}{\|(\mu,\phi)\|^{2}}\right)
    \end{equation}

\noindent On the other hand, observe that in \lemm{2.4} we proved that $\Tr D_{\mu,\phi} \geqslant 0$. This shows that the energy functional on $\mcal{L}_{n}$ is always bounded below by $\tfrac{1}{n}$ and one may wonder when this value is actually attained. From the Stratification Theorem, one can see that this will be the case if, and only if, the action of the subgroup 

    \begin{equation*}
SC_{n}(\mfr{g}) \coloneqq SL(n,\mbb{R}) \times \iner(\mfr{g})
    \end{equation*}

\noindent has any closed orbits. As we shall learn, this will always be the case save for a few exceptional choices of $\mfr{g}$ and $n$.   

    \begin{lemma}
For $(\mu:\phi) \in \mcal{L}_{n}(\mfr{g})$ the following are equivalent:

    \begin{enumerate}

        \item $D_{\mu,\phi} = 0$.
    
        \item $E(\mu:\phi) = \tfrac{1}{n}$.

        \item $E(\mu:\phi)(\id,0) + m(\mu:\phi) = 0$.

        \item $SC_{n} \cdot (\mu,\phi)$ is closed.
        
    \end{enumerate}

\noindent Furthermore, if one of those conditions holds true then $(\mu,\phi)$ will be a global minimum of the energy on $\mcal{L}_{n}(\mfr{g})$ and its algebra of pair derivations will be $\Tilde{\theta}$-invariant:

    \begin{equation}
(\delta,v) \in \der(\mu,\phi) \iff \Tilde{\theta}(\delta,v) = (-\delta^{*},\theta(v)) \in \der(\mu,\phi)
    \end{equation}

    \begin{proof}
Indeed, (1) and (2) are equivalent because of the formula \eq{3.1} and \lemm{2.4}. Also, by the same lemma and definition of $(D,u)_{\mu,\phi}$, we see that (1) and (3) are equivalent. The equivalence of (3) and (4) follows from the Stratification Theorem. Now, if any of the conditions (1)-(4) holds true then \lastequation{0} follows from (3) and \lemm{2.3}.
    \end{proof}
    \end{lemma}

Hence at the lowest energy level the moment map of a critical pair takes a very nice form, that being $-\tfrac{1}{n}(\id,0)$. Because of this fact the strata $\mcal{S}_{\frac{1}{n}}$ will be strongly limited in terms of the degenerations which occur between its points. Indeed, one can already note a compatibility between the algebraic and metric structure of a critical pair by observing \lastequation{0}. As we shall see, this is the cause of very strong constrains in both structures of the pairs in $\mcal{S}_{\frac{1}{n}}$ and it shall be the cornerstone of the results that will follow. 

We start with abelian pairs $(0:\phi)$ of $\mcal{S}_{\frac{1}{n}}$. Recall the definition from chapter 8 of \cite{humphreysIntroductionLieAlgebras1973} that a Lie subalgebra consisted of only semi-simple elements is called a toral subalgebra. Also, since we are working with the field of real numbers, an abelian subalgebra of $\mfr{g}$ is toral if, and only if, it is contained in some Cartan subalgebra. 

    \begin{theorem}
Let $(0:\phi) \in \mcal{L}_{n}(\mfr{g})$. Then $E(0:\phi) = \tfrac{1}{n}$ precisely when $\phi$ is a homothety and $\ima\phi$ is contained in some $\theta$-invariant Cartan subalgebra. Consequently, the orbit of the pair $(0:\phi)$ will contain a critical pair with energy $\tfrac{1}{n}$ if, and only if, $\ker\phi = 0$ and $\ima\phi$ is toral. 

    \begin{proof}

First suppose that $\phi$ is injective and its image is a toral subalgebra of $\mfr{g}$. We may find a $h \in \iner \mfr{g}$ and a $\theta$-invariant Cartan subalgebra $\mfr{t}$ of $\mfr{g}$ such that $h[\ima\phi] \leqslant \mfr{t}$ and then define $g = \sqrt{\phi^{*}h^{*}h\phi}$. So we conclude that $\psi = h \phi g^{-1}$ is a homothety and its image is contained in the $\theta$-invariant subalgebra $\mfr{t}$. Note then that $[\theta\psi(X),\psi(X)] = 0$ for all $X$, which implies that $u_{0,\psi} = 0$ and

    \begin{equation*}
m(0:\psi) = \frac{1}{\|(0,\psi)\|^{2}}(M(0)-\psi^{*}\psi,0) = -\frac{1}{n}(\id,0).
    \end{equation*}

\noindent Hence $E(0:\psi) = \tfrac{1}{n}$ and we conclude that the orbit of the pair $(0:\phi)$ is distinguished and it is contained in the strata $\mcal{S}_{\frac{1}{n}}$.

Conversely, suppose that $E(0:\phi) = \tfrac{1}{n}$. So we note that $D_{0,\phi} = 0$ which implies that

    \begin{align*}
\phi^{*}\phi & = D_{0,\phi} + \phi^{*}\phi \\
& = k_{0,\phi}\id  + M(0) \\
& = k_{0,\phi}\id.
    \end{align*}

\noindent And we see that $\phi$ is a homothety. Additionally, one sees that $(0,\theta\phi(X)) \in \der(0,\phi)$ for every $X$, by virtue of \lastequation{0}. Consequently, 

    \begin{equation*}
[\theta\phi(X), \phi(Y)] = 0, \qquad X,Y \in \mfr{h}_{0}
    \end{equation*}

\noindent and so $\ima\theta\phi$ centralizes $\ima\phi$. It is also easy to see that $\ima\phi$ is contained in some $\theta$-invariant Cartan subalgebra since the vector space spawned by $\{\theta\phi(X),\phi (X) \mid X \in \mfr{h}_{0}\}$ is a $\theta$-invariant abelian subalgebra of $\mfr{g}$. Consequently, any other pair $(0,\psi)$ in the same orbit as $(0,\phi)$ will be such that $\ima\psi$ is toral.
    \end{proof}
    \end{theorem}

Then we have found a class of distinguished pairs in $\mcal{S}_{\frac{1}{n}}$ which are noted to posses an intimate relation with the algebraic and metric structure of $\mfr{g}$. To find the remaining ones we will need the technical result, which will also be used in the latter section. 

    \begin{lemma}
For $(\mu:\phi) \in \mcal{L}_{n}(\mfr{g})$ let $\mfr{r}$ be a subalgebra of $\mfr{h}_{\mu}$ for which

    \begin{equation*}
\Tilde{\theta}\ad^{\mu,\phi}_{X} = (-\ad_{X}^{\mu*}, \theta\phi(X)) \in \der(\mu,\phi)
    \end{equation*}

\noindent whenever $X \in \mfr{r}$ and for which its orthogonal complement $\mfr{r}^{\perp}$ is $\ad^{\mu}[\mfr{r}]$-invariant. Then 

    \begin{enumerate}

        \item $\mfr{r}$ is a reductive Lie algebra.
        
        \item For every ideal $\mfr{i}$ of $\mfr{r}$ its orthogonal complement in $\mfr{r}$ is also a ideal.
    
        \item The endomorphism $\phi^{*}\phi$ is $\ad^{\mu}[\mfr{r}]$-invariant.

    \end{enumerate}

    \begin{proof}
\hfill\par
    \begin{enumerate}

        \item For $X \in \mfr{r}$ we have, by hypothesis, that $\ad_{X}^{\mu*}$ is a derivation of $\mfr{r}$ and so it must leave the subalgebra $\mfr{s} \coloneqq [\mfr{r},\mfr{r}]$ invariant. In turn, this implies that $\ad^{\mu}_{X}$ must leave its orthogonal complement $\mfr{a} = \mfr{s}^{\perp} \cap \mfr{r}$ invariant as well. We conclude that $\mfr{a}$ is a abelian ideal of $\mfr{r}$. Naturally, the same argument applies to $\mfr{s}$ proving this must be a perfect Lie algebra.

        Now, let $B$ be the killing form of $\mfr{s}$. The subspace $\ker B$ is a ideal of $\mfr{s}$ which is left invariant by any derivation of $\mfr{s}$. Consequently, $\ker B^{\perp}$ must be invariant by any derivation of $\mfr{s}$ as well, in particular $\ker B^{\perp}$ is a ideal of $\mfr{s}$. It now follows that $\ker B$ is a solvable Lie algebra, since its killing form vanishes, and that it is also perfect. Therefore, $\ker B = 0$ and $\mfr{s}$ is a semi-simple Lie algebra.

        \item Take $X \in \mfr{i}^{\perp} \cap \mfr{r}$. Then to each $Y \in \mfr{r}$ we may find a $Y'$ also in $\mfr{r}$ for which

            \begin{equation*}
        \ad_{Y}^{*}|_{\mfr{r}} = \ad_{Y'}|_{\mfr{r}}. 
            \end{equation*}

        \noindent This follows from (1) and the hypothesis. Then to each $Z \in \mfr{i}$ we have

            \begin{equation*}
        \langle [Y,X],Z \rangle = \langle X, [Y',Z] \rangle = 0.
            \end{equation*}

        \noindent So $[Y,X] \in \mfr{i}^{\perp} \cap \mfr{r}$.

        \item This is a simple calculation: Fix $X \in \mfr{r}$ and let $Y,Z \in \mfr{h}_{\mu}$ be arbitrary. We see that

            \begin{align*}
        \langle \phi^{*}\phi \ad^{\mu}_{X} Y,Z \rangle & = \langle \ad_{\phi X} \phi Y, \phi Z \rangle \\
        & = -\langle \phi Y, \ad_{\theta\phi X}\phi Z \rangle \\
        & = \langle \phi Y, \phi\ad_{X}^{\mu*} Z \rangle \\
        & = \langle \ad^{\mu}_{X}\phi^{*}\phi Y,Z \rangle
            \end{align*}

        \noindent Thus for every $X \in \mfr{r}$ it holds that

            \begin{equation*}
        [\phi^{*}\phi,\ad_{X}] = 0.
            \end{equation*}
    \end{enumerate}    
    \end{proof}
    \end{lemma}

In our context of working with critical pairs $(\mu,\phi)$ in $\mcal{S}_{\frac{1}{n}}$ this last lemma can be applied to $\mfr{r} = \mfr{h}_{\mu}$ to show that all of them must come from reductive Lie algebras and that some decomposition could be done. Indeed, we shall have the following:

    \begin{proposition}
Let $(\mu:\phi) \in \mcal{S}_{\frac{1}{n}}$ be a critical pair. Then $\mfr{h}_{\mu}$ is a reductive Lie algebra which can be orthogonally decomposed as a sum of its center $\mfr{h}_{0}$ with its simple ideals $\mfr{h}_{i}$, $i=1, \dots, k$. Such decomposition makes each restriction $\phi_{i} \coloneqq \phi|_{\mfr{h}_{i}}$ into a homothety such that

    \begin{equation}
\ima\phi_{i} \perp \ima\phi_{j}. 
    \end{equation}

\noindent whenever $i \neq j$.

Furthermore, the pair $(\mu_{i}:\phi_{i}) \in \mcal{L}_{n_{i}}(\mfr{g})$ corresponding to the Lie algebra $\mfr{h}_{i}$ and the restriction $\phi_{i}$ is a critical pair with energy $\frac{1}{n_{i}}$.

    \begin{proof}
Since $(\mu:\phi)$ is a critical pair for the strata $\mcal{S}_{\frac{1}{n}}$ the derivation $D_{\mu,\phi}$ must be zero and the hypotheses of \lemm{3.3} holds for $\mfr{r} = \mfr{h}_{\mu}$. It follows that $\mfr{h}_{\mu}$ is a reductive Lie algebra which is the orthogonal sum of its center $\mfr{h}_{0}$ with its simple ideals $\mfr{h}_{i}$, $i=1, \dots, k$. Also, from the same lemma, we see that $\phi^{*}\phi$ commutes with each inner derivation of $\mfr{h}_{\mu}$. From this it follows that each one of its eigenspaces is $\ad[\mfr{h}_{\mu}]$-invariant, thus an ideal.

Even more, since the center and the derived ideal of $\mfr{h}_{\mu}$ are orthogonal and the transpose of each derivation is again a derivation the formula in \prop{2.6} shows that for each central $X$ we will have $M(\mu)X = 0$. In this case 

    \begin{equation*}
\phi^{*}\phi(x) = (k_{\mu,\phi}\id - M(\mu))(x) = k_{\mu,\phi}x.
    \end{equation*}

In the end we see that each eigenspace of $\phi^{*}\phi$ is an direct sum of the ideals $\mfr{h}_{i}$. Thus if we denote by $\phi_{i}$ the restriction of $\phi$ to $\mfr{h}_{\mu_{i}}$ we obtain \lastequation{0}. 

Now, let $(\mu_{i}:\phi_{i}) \in \mcal{L}_{n_{i}}$ be the pair corresponding to the Lie algebra $\mfr{h}_{i}$ and the restriction homomorphism $\phi_{i}: \mfr{h}_{i} \to \mfr{g}$. We must show that $(\mu_{i}:\phi_{i})$ is a critical pair. First, denote the vector $u_{\mu_{i},\phi_{i}}$  by $u_{i}$. Then it follows from the orthogonality of the ideals of $\mfr{h}_{\mu}$ that

    \begin{equation*}
\sum_{i,j=0}^{k} \langle u_{i}, u_{j} \rangle = \|u_{\mu,\phi}\|^{2} = 0.
    \end{equation*}

\noindent However if we take orthonormal bases $\{e_{k}\}$ and $\{e'_{l}\}$ for $\mfr{h}_{i}$ and $\mfr{h}_{j}$, respectively, it is also true that

    \begin{align*}
\langle u_{i}, u_{j} \rangle & = \sum_{k,l} \langle [\theta\phi_{i}(e_{k}), \phi_{i}(e_{k})], [\theta\phi_{j}(e'_{l}), \phi_{j}(e'_{l})] \rangle \\
& = -\sum_{k,l} \langle [\phi_{j}(e'_{l}),[\theta\phi_{i}(e_{k}), \phi_{i}(e_{k})]], \phi_{j}(e'_{l}) \rangle \\
& = -\sum_{k,l} \langle [[\phi_{j}(e'_{l}),\theta\phi_{i}(e_{k})], \phi_{i}(e_{k})], \phi_{j}(e'_{l}) \rangle \\
& = \sum_{k,l} \langle [[\phi_{j}(e'_{l}),\theta\phi_{i}(e_{k})], [\phi_{j}(e'_{l}), \theta\phi_{i}(e_{k})] \rangle \\
& \geqslant 0.
    \end{align*}

\noindent So we conclude that $u_{i} = 0$ for all $i$. 

Finally, it follows from \lastequation{0} that $\phi^{*}\phi|_{\mfr{h}_{i}} = \phi_{i}^{*}\phi_{i}$ and from the orthogonality relations for the ideals that $M(\mu) = \sum_{i=0}^{k} M(\mu_{i})$, thus

    \begin{equation*}
M(\mu_{i},\phi_{i}) = (M(\mu_{i}) - \phi_{i}^{*}\phi_{i},u_{i}) = ((M(\mu) - \phi^{*}\phi)|_{\mfr{h}_{i}},0) = -\frac{\|(\mu,\phi)\|^{2}}{n}(\id_{\mfr{h}_{i}},0).
    \end{equation*}

\noindent and we conclude that $(\mu_{i},\phi_{i})$ is a critical pair with minimal energy for all $i$.
    \end{proof}
    \end{proposition}

\subsubsection{Remark:} From the above result we can identify the cases for which $\mcal{S}_{\frac{1}{n}}$ will be empty by the merit of not existing a reductive pair $(\mu,\phi)$ for which the homomorphism is not injective in the center or its image is not toral. Indeed, this restriction will happen when both $n$ and the rank of $\mfr{g}$ are too small, which is the case exactly when $n=2$ and $\mfr{g}$ is either $\mfr{su}(2)$ or $\mfr{sl}(2)$. Latter we shall note that those are the only two cases for which $E(\mu:\phi) = \tfrac{1}{n}$ has no solution in $\mcal{L}_{n}(\mfr{g})$. \\

Also, from \prop{3.4}, we learn that one can reduce the study of critical pairs in the strata $\mcal{S}_{\frac{1}{n}}$ to the case of abelian and semi-simple pairs. With the former case being completely understood by \theo{3.2} we now work towards understanding the latter. 

%}
\subsection{Criticality of semi-simple pairs}

In the following we shall prove \theo{1.1}. As we observed in the introduction, this result will bring enough information about the semi-simple pairs and to finish the study of the global minimal for the energy functional. 

The variety $L_{n}^{0} \coloneqq L_{n}(0)$, where $0$ is the $0$-dimensional Lie algebra, parametrizes all Lie brackets in $\mbb{R}^{n}$. Note that $L_{n}^{0}$ naturally embeds inside $L_{n}(\mfr{g})$ as a closed subvariety by means of the equivariant map $\mu \mapsto (\mu,0)$; the same is true for the projectivization $\mcal{L}_{n}^{0} \hookrightarrow \mcal{L}_{n}(\mfr{g})$. The following lemma is a rewording of Theorem 4.3 in \cite{lauret2002momentmapvarietylie} more suitable for our needs.

    \begin{lemma}
Take $\mu \in \mcal{L}_{n}^{0}$ semi-simple. Then its $SC_{n}$-orbit is closed and $E(\mu:0) = \tfrac{1}{n}$ exactly when there is a Cartan involution $\theta'$ for $\mfr{h}_{\mu}$ with

    \begin{equation*}
\langle X,Y \rangle = - \frac{2n}{\|\mu\|^{2}}B_{\mu}(\theta' X,Y).
    \end{equation*}

\noindent In particular $(\mu,0)$ as a closed $SC_{n}(\mfr{g})$-orbit.

    \begin{proof}
Take $\theta''$ a Cartan involution for $\mfr{h}_{\mu}$ and then find an element $g \in GL(\mbb{R}^{n})$ for which the following relation holds:

    \begin{equation*}
\langle gX,gY \rangle = - \frac{2n}{\|\mu\|^{2}}B_{\mu}(\theta'' X,Y).
    \end{equation*}

Then if we put $\mu' = g \cdot \mu$ and $\theta' = g\theta'' g^{-1}$ we will obtain a semi-simple Lie algebra $\mfr{h}_{\mu'}$ with a Cartan involution $\theta'$ for which

    \begin{equation*}
\langle X,Y \rangle = -\frac{2n}{\|\mu\|^{2}}B_{\mu'}(\theta' X,Y).
    \end{equation*}

\noindent Now, from the formula for the moment map in \prop{2.6}, we may calculate $M(\mu')$:

    \begin{align*}
\langle M(\mu')X,Y \rangle & = \frac{1}{2}\sum_{i,j} \langle X,\mu'(e_{i},e_{j}) \rangle\langle \mu'(e_{i},e_{j}),Y \rangle - \sum_{i} \langle \mu'(X,e_{i}),\mu'(Y,e_{i}) \rangle \\
& = \frac{1}{2}\sum_{i} \langle \mu'(X,\theta' e_{i}), \mu'(Y,\theta' e_{i}) \rangle - \sum_{i} \langle \mu'(X,e_{i}),\mu'(Y,e_{i}) \rangle \\
& = -\frac{1}{2}\Tr \left( \ad^{\mu'*}_{x}\ad^{\mu'}_{Y}\right) \\
& = \frac{1}{2}B_{\mu'}(\theta' X,Y) \\
& = -\frac{\|\mu\|^{2}}{n} \langle X,Y \rangle
    \end{align*}

\noindent Thus $M(\mu') = -\tfrac{\|\mu'\|^{2}}{n}$, from which also follows that $\|\mu'\| = \|\mu\|$. This is enough to establish the result.
    \end{proof}
    \end{lemma}

Also, a very important fact about the semi-simple pairs is their Lie algebra of pair derivations. The situation is similar to the case of semi-simple Lie algebras, when most of the information is contained on $\mfr{h}_{\mu}$ itself.

    \begin{lemma}
Take $(\mu,\phi) \in \mcal{L}_{n}(\mfr{g})$ semi-simple. Then algebra of pair derivations splits as a product

    \begin{equation*}
\der(\mu,\phi) \cong \mfr{inn}(\mu,\phi) \oplus \mfr{z}_{\mfr{g}}(\ima\phi) \cong \der(\mu) \oplus \mfr{z}_{\mfr{g}}(\ima\phi).
    \end{equation*}

\noindent Consequently, we know its dimension

    \begin{equation*}
\dim\der(\mu,\phi) = \dim \mfr{z}_{\mfr{g}}(\ima\phi) + n.
    \end{equation*}

    \begin{proof}
First, observe that

    \begin{equation*}
\mfr{inn}(\mu,\phi) \cong \mfr{h}_{\mu} \cong \der(\mu),
    \end{equation*}

\noindent since by means of semi-simplicity, both $\ad^{\mu,\phi}$ and $\ad^{\mu}$ are isomorphisms into their images. Also, note that for every $v \in \mfr{z}_{\mfr{g}}(\ima\phi)$ we have $(0,v) \in \der(\mu,\phi)$, thus the centralizer of $\ima\phi$ is a subalgebra of $\der(\mu,\phi)$. In fact, we see that

    \begin{equation*}
\mfr{inn}(\mu,\phi) \cap \mfr{z}_{\mfr{g}}(\ima\phi) = \ker\ad^{\mu} = 0.
    \end{equation*}
    
Now, take $(\delta,v)$ in $\der(\mu,\phi)$. Then, by semi-simplicity, $\delta$ is an inner derivation of $\mfr{h}_{\mu}$, say we have $\delta = \ad^{\mu}_{X}$, so

    \begin{equation*}
(\delta,v) = (0,v-\phi(X)) + \ad^{\mu,\phi}_{X}.
    \end{equation*}

\noindent And for $Y \in \mfr{h}_{\mu}$ we have

    \begin{equation*}
[v-\phi(X),\phi(Y)] = \ad_{v}\phi(Y) - \phi\ad^{\mu}_{X}(Y) = 0.
    \end{equation*}

\noindent Hence $v-\phi(x) \in \mfr{z}_{\mfr{g}}(\ima\phi)$ and the lemma follows.
    \end{proof}    
    \end{lemma}

Then the culmination of our previous work will be the following theorem which establishes that every semi-simple pair is distinguished. From this result we shall prove the theorems in the introduction concerning the semi-simple pairs.

    \begin{proof}[Proof of \theo{1.1}]
    	
Let $(\mu,\phi) \in L_{n}$ be semi-simple, then the result of this theorem will follow from \lemm{3.1} if we prove that the $SC_{n}$-orbit of $(\mu,\phi)$ is closed. To do so, let $(\nu,\psi) \in L_{n}$ be such that

	\begin{equation}
SC_{n}\cdot(\nu,\psi) \subseteq \overline{SC_{n}\cdot(\mu,\phi)}
	\end{equation}

\noindent Thus if we prove equality in \lastequation{0} we will conclude that $SC_{n}$-orbit of $(\mu,\phi)$ is closed. To do so we claim that

    \begin{equation}
\dim\der(\mu,\phi) = \dim \der(\nu,\psi). 
	\end{equation}

\noindent from which will follow that the $SC_{n}$-orbits of $(\mu,\phi)$ and $(\nu,\psi)$ have the same dimension, and then \lastequation{1} would need to be an equality.

First, observe that \lastequation{1} implies that $SC_{n}\cdot\nu \subseteq \overline{SC_{n}\cdot\mu}$, thus we may assume, from \lemm{3.5}, that $\nu = \mu$. Then, by the Hilbert-Mumford criterion, there is an $\alpha \in \tilde{\mfr{p}}$ and $g \in GC_{n}$ such that

    \begin{equation}
\lim_{t\to\infty} e^{t\alpha} \cdot (\mu,\phi) = g\cdot(\mu,\psi).
    \end{equation}

\noindent However, from continuity, we also have $e^{t\alpha} \cdot \mu \to g\cdot\mu$ and since the $SC_{n}$-orbit of $\mu$ is closed we apply \lemm{2.10} to conclude that $\alpha = \left(\ad^{\mu}_{X},v\right)$ and $g = (h,g')$, where $X \in \mfr{h}_{\mu}$, $v \in \mfr{g}$, $h \in \aut(\mfr{h}_{\mu})$ and $g' \in \iner(\mfr{g})$. By exchanging $\psi$ for $g'\psi h^{-1}$ we may assume that $g = 1$.

Observe that $X$ must be a semi-simple element in $\mfr{h}_{\mu}$, since $\alpha = \left(\ad^{\mu}_{X},v\right)$ is in $\tilde{\mfr{p}}$. It follows that there exists a Cartan subalgebra $\mfr{t}$ of $\mfr{h}_{\mu}$ which contains $X$. From \lastequation{0} we obtain

    \begin{equation*}
\lim_{t\to\infty} e^{t(0,v)}\left( 0,\phi|_{\mfr{t}} \right) = \left( 0,\psi|_{\mfr{t}} \right).
    \end{equation*}

\noindent And then we may evoke \theo{3.2} and \lemm{2.10} to conclude that 

    \begin{equation}
\phi|_{\mfr{t}} = \psi|_{\mfr{t}}.
    \end{equation}

Now, a complexification of the objects involved shows that $\ad \circ \phi^{\mbb{C}}$ and $\ad \circ \psi^{\mbb{C}}$ are both equivalent representations $\mfr{h}_{\mu}^{\mbb{C}} \to \mfr{gl}\left( \mfr{g}^{\mbb{C}} \right)$. This follows imitatively from that fact that $\phi^{\mbb{C}}$ and $\psi^{\mbb{C}}$ are equal on the Cartan subalgebra $\mfr{t}^{\mbb{C}}$, by \lastequation{0}, so the weights and their multiplicities are equal for each representation. Alternatively, one can refer back to  Theorem 1.1 in \cite{Dynkin1957SemisimpleSO} as well. Then equation \lastequation{2} follows from \lemm{3.6} and because $\dim\mfr{z}_{\mfr{g}}(\ima\phi)$ equals the multiplicity of the trivial representation in $\ad \circ \phi^{\mbb{C}}: \mfr{h}_{\mu}^{\mbb{C}} \to \mfr{gl}\left( \mfr{g}^{\mbb{C}} \right)$ and, likewise, the same for $\dim\mfr{z}_{\mfr{g}}(\ima\psi)$.
    \end{proof}

As we have mentioned in the introduction, \theo{1.2} follows from \theo{1.1}. Indeed, from this result, and \lemm{3.1}, we know that the $SC_{n}$-orbit of any semi-simple pair is closed in $L_{n}$. In fact, we can conclude that the $GC_{n}$-orbit of a semi-simple pair is closed in the projectivization $\mcal{L}_{n}$. Then we obtain:

	\begin{proof}[Proof of \theo{1.2}]
Let $X$ be the collection of all semi-simple pairs in $\mcal{L}_{n}$. By Cartan's criterion on semi-simplicity, we know that $X$ is a semi-algebraic real variety which has finitely many components (Lojasiewicz‘s Theorem). On the other hand, we also know that $X$ is the disjoint union of $GC_{n}$-orbits, all of which are closed. It follows now that the number of $GC_{n}$-orbits in $X$ cannot be greater than the number of its connected components, thus it has to be finite. Finally, by taking complements of unions, it follows that each $GC_{n}$-orbit in $X$ is not only closed but also open in $X$. Since $X$ is open in $\mcal{L}_{n}$, again by Cartan's criterion, those orbits must be open in $\mcal{L}_{n}$ also.
	\end{proof}
 
Also, from the results above, we can finally know all the cases for which $E(\mu:\phi) = \frac{1}{n}$ has no solution. 

	\begin{proposition}
Aside from two exceptional cases, the minimal value of the energy functional $E: \mcal{L}_{n}(\mfr{g}) \to \mbb{R}$ is $\frac{1}{n}$. The only exceptions occur when $n=2$ and $\mfr{g}$ has rank 1, i.e., $\mfr{g}$ is either $\mfr{su}(2)$ or $\mfr{sl}(2)$. 

	\begin{proof}
The basic observation we need to make is that if $n\neq 2$ or if the rank of $\mfr{g}$ is not one then we can find a $\theta$-invariant Cartan subalgebra $\mfr{t} \leqslant \mfr{g}$ and a pair $(\mu,\phi) \in L_{n}(\mfr{g})$ with the following properties:

	\begin{itemize}
	
	\item The Lie algebra $\mfr{h}_{\mu}$ is reductive and the dimension of its center does not exceed the rank of $\mfr{g}$.
	
	\item The homomorphism $\phi: \mfr{h}_{\mu}: \mfr{h}_{\mu} \to \mfr{g}$ has as kernel the semi-simple part $\mfr{h}_{\mu}$ and its image is a subspace of $\mfr{t}$.
	
	\end{itemize}
	
\noindent Then it follows from the \prop{2.6}, \theo{3.2} and \lemm{3.5} that the energy functional attains the value $\tfrac{1}{n}$ in the orbit of $(\mu,\phi)$.

On the other hand, as in Remark 3.0.1, we know that $E(\mu:\phi) = \tfrac{1}{n}$ has no solution in the variety $\mcal{L}_{n}(\mfr{g})$ if $n=2$ and $\rank\mfr{g}=1$ since there will be no pair $(\mu,\phi)$ which satisfies the conclusions of \prop{3.4}.
	\end{proof}
	\end{proposition}   
 
\subsection{The Mostow Theorem}

We shall now finish this section and prove the Mostow Theorem (\theo{1.3}). Quite notably, this theorem follows quite naturally from a attempt to characterize the semi-simple critical pairs in the molds of \lemm{3.5}. We start with a simple lemma in linear algebra:

    \begin{lemma}
Let $V$ be an inner product space. Suppose that $T_{1}, \dots, T_{m}$ is a family of linear endomorphisms of $V$ such that

    \begin{equation*}
\sum_{i=1}^{m} [T_{i},T_{i}^{*}] = 0.
    \end{equation*}

\noindent Then to each $T_{i}$-invariant subspace $W$ the subspace $W^{\perp}$ is also $T_{i}$-invariant. 

    \begin{proof}
Let $\pi: V \to W$ be the orthogonal projection and let $\{e_{1}, \dots, e_{n}\}$ be an orthonormal basis for $V$ such that $\{e_{k+1}, \dots, e_{n}\}$ makes out a base for $W$. We calculate

    \begin{align*}
0 & = \sum_{i=1}^{m} \langle \pi, [T_{i},T_{i}^{*}] \rangle \\
& = \sum_{i=1}^{m} \langle [\pi,T_{i}], T_{i} \rangle \\
& = \sum_{i=1}^{m} \sum_{j=1}^{n} \langle [\pi,T_{i}](e_{j}), T_{i}(e_{j}) \rangle \\
& = \sum_{i=1}^{m} \sum_{j=1}^{k} \langle \pi T_{i}(e_{j}), T_{i}(e_{j}) \rangle
    \end{align*}

\noindent Since $\pi$ is a positive semi-definite self-adjoint operator of $V$ we see that

    \begin{equation*}
\pi T_{i}(e_{j}) = 0,
    \end{equation*}

\noindent for all $i$ and $j = 1, \dots, k$. This observation proves that $W^{\perp}$ is also $T_{i}$-invariant. 
    \end{proof}
    \end{lemma}

Then we now show how to construct for each simple subalgebra of $\mfr{g}$ a compatible Cartan involution. The main idea will be to take for a simple pair the element in its orbit which is an critical point of the energy and then take the Cartan involution provided by \lemm{3.5} and show it is compatible with $\theta$.

    \begin{proposition}

Take $(\mu:\phi) \in \mcal{L}_{n}$ simple, i.e., $\mfr{h}_{\mu}$ is simple. Then $E(\mu:\phi) = \tfrac{1}{n}$ exactly when there is a Cartan involution $\theta'$ for $\mfr{h}_{\mu}$ which satisfies

    \begin{equation}
\theta\phi = \phi\theta' \qquad \qquad \text{and} \qquad \langle X,Y \rangle = - \frac{2n}{\|\mu\|^{2}}B_{\mu}(\theta'(X),Y).
    \end{equation}

    \begin{proof}
First, assume that one has a Cartan involution $\theta'$ as in \lastequation{0}. Then immediately following \lemm{3.5} we have that

    \begin{equation*}
M(\mu) = -\frac{\|\mu\|^{2}}{n}\id
    \end{equation*}

\noindent Also, one easily sees that

    \begin{equation*}
u_{\mu,\phi} = 0.
    \end{equation*}

\noindent On the other hand, we have for every $X$:

    \begin{align*}
\phi^{*}\phi \ad^{\mu}_{X} & = \phi^{*} \ad_{\phi(X)}\phi \\
& = -\phi^{*} \ad_{\theta\phi(X)}^{*}\phi \\
& = -\left( \ad_{\phi\theta'(X)}\phi \right)^{*}\phi \\
& = -\left( \phi\ad_{\theta'(X)} \right)^{*}\phi \\
& = \ad_{X}\phi^{*}\phi \\
    \end{align*}

\noindent and then we see that $\phi^{*}\phi$ must be a multiple of the identity, by the simplicity of $\mfr{h}_{\mu}$. This is enough to conclude that $E(\mu,\phi) = \frac{1}{n}$.

Conversely, suppose that $E(\mu:\phi) = \tfrac{1}{n}$. We will then construct the required Cartan involution. In view of \prop{3.4} we may assume that $\phi$ is a isometry, up to rescaling of the pair. With those assumptions in place one look at the formula for the moment map of a pair reveals that $E(\mu:0) = \tfrac{1}{n}$ as well. Let $\theta'$ be the Cartan involution constructed in \lemm{3.5}, we shall argue that this is the one we are looking for. Indeed, we already have that $\theta'$ satisfies the right hand side of \lastequation{0}.

Observe that $\theta'$ has the property that $\ad_{X}^{\mu*} = -\ad_{\theta' X}^{\mu}$ for every $X$. Thus from \lemm{3.1} we see that $\left( \ad_{\theta'(X)}^{\mu},\theta\phi(X) \right)$ is a pair derivation which implies a certain compatibility between the two Cartan involutions:

    \begin{equation*}
\ad_{\theta\phi(X)}\phi = \phi\ad_{\theta'(X)}^{\mu} = \ad_{\phi\theta'(X)}\phi
    \end{equation*}

\noindent which shows that

    \begin{equation}
\ima(\theta\phi-\phi\theta') \leqslant \mfr{z}_{\mfr{g}}(\ima\phi).
    \end{equation}

Now, view $\mfr{g}$ as a $\mfr{h}_{\mu}$-module by means of the representation $\rho(X) = \ad_{\phi(X)}$. Of course, the image of $\phi$ will be a submodule. Since $u_{\mu,\phi} = 0$ we have

    \begin{equation*}
\sum_{i=1}^{n} [\rho(e_{i}),\rho(e_{i})^{*}] = 0
    \end{equation*}

\noindent for some orthonormal basis $\{e_{i}\}$ of $\mfr{h}_{\mu}$. Thus the hypothesis of \lemm{3.9} are in place and we are able to conclude that $\ima\phi^{\perp} = \ker\phi^{*}$ is a $\mfr{h}_{\mu}$-submodule complementary to $\ima\phi$. Furthermore, since, as $\mfr{h}_{\mu}$-modules, the centralizer $\mfr{z}_{\mfr{h}}(\ima\phi)$ and the image $\ima\phi$ are nonequivalent we must have 

    \begin{equation*}
\mfr{z}_{\mfr{g}}(\ima\phi) \leqslant \ker\phi^{*}.
    \end{equation*}

\noindent As a particular case of this inclusion, we take \lastequation{0} and the fact that $\phi$ is a isometry to see that

    \begin{equation}
\theta' = \phi^{*}\theta\phi
    \end{equation}

We finish by estimating, for each $X \in \mbb{R}^{n}$, the following quantity: 

    \begin{equation*}
\|(\phi\theta'-\theta\phi)(X)\|^{2} = \langle \phi\theta'(X),(\phi\theta'-\theta\phi)(X)\rangle - \langle \theta\phi(X),(\phi\theta'-\theta\phi)(X)\rangle.
    \end{equation*}

\noindent First we see that

    \begin{align*}
\langle \phi\theta'(X),(\phi\theta'-\theta\phi)(X)\rangle & = \langle \theta'(X),\phi^{*}(\phi\theta'-\theta\phi)(X)\rangle \\
& = \langle \theta'(X),(\theta'-\phi^{*}\theta\phi)(X)\rangle \\
& = 0
    \end{align*}

\noindent and then that

    \begin{align*}
\langle \theta\phi(X),(\phi\theta'-\theta\phi)(X)\rangle & = \langle \theta\phi(X),\phi\theta'(X)\rangle - \langle \theta\phi(X),\theta\phi(X)\rangle \\
& = \langle \phi^{*}\theta\phi(X),\theta'(X) \rangle - \langle X,X \rangle \\
& = \langle \theta'(X),\theta'(X) \rangle - \langle X,X \rangle \\
& = 0.
    \end{align*}

\noindent Therefore, $\|(\phi\theta'-\theta\phi)(X)\|^{2} = 0$ and \lastequation{2} holds.
    \end{proof}
    \end{proposition}

On the general case, a semi-simple pair $(\mu,\phi) \in L_{n}$ with energy $\tfrac{1}{n}$ will admit a Cartan involution $\theta': \mfr{h}_{\mu} \to \mfr{h}_{\mu}$ which is compatible with $\theta$ in the sense that

	\begin{equation*}
\theta\phi = \phi\theta'.
	\end{equation*}

\noindent This follows once we apply \theo{3.9} to each simple factor of $\mfr{h}_{\mu}$, noting they would be minimal points of the energy as well because of \prop{3.4}, and then extending each so obtained Cartan involution to one in $\mfr{h}_{\mu}$. With this fact, we can easily prove Mostow's Theorem.

	\begin{proof}[Proof of \theo{1.3}]
Let $(\mu,\varphi)$ be a semi-simple pair in $L_{n}(\mfr{g})$ and let $\sigma': \mfr{h}_{\mu} \to \mfr{h}_{\mu}$ be any given Cartan involution. By \theo{1.1}, we may find an $(g,f) \in GC_{n}$ for which the pair $(\nu,\psi) = (g,f)^{-1}(\mu,\phi)$ is critical and then by the cometary above, we can find an Cartan involution $\theta': \mfr{h}_{\nu} \to \mfr{h}_{\nu}$ for which 

	\begin{equation*}
\theta\psi = \psi\theta'.
	\end{equation*} 
	
\noindent On the other hand, note that $g\theta'g^{-1}$ is a Cartan involution for $\mfr{h}_{\mu}$, hence we may find an inner automorphism $g'$ for $\mfr{h}_{\mu}$ which satisfies

	\begin{equation*}
\sigma' = g'g\theta'(g'g)^{-1}.
	\end{equation*}
	
\noindent Additionally, relating to $g'$, there is another inner automorphism $f'$ for $\mfr{g}$ which satisfies $f'\phi = \phi g'$, so then $(g',f') \in \aut(\mu,\phi)$. Finally, we define $\sigma = f'f\theta(f'f)^{-1}$ and note that this is the Cartan involution for $\mfr{g}$ which satisfies

	\begin{equation*}
\sigma\phi = \phi\sigma'.
	\end{equation*}
	\end{proof}

\setcounter{section}{3}
\section{The general structure of a critical pair}
    % !TeX spellcheck = en_US

%{

We now study the general structure of critical pairs. Much of what can be said in this generality will come by studying the eigenvalues of the pair derivation $(D,u)_{\mu,\phi}$ and exploiting their interaction with both the algebraic and metric structure of the critical pairs.

Since we are mostly doing work with eigenvalues for different endomorphisms in distinct vector spaces, we will need to find appropriate bases to work with. This next lemma will take care of this.

    \begin{lemma}
Let $(\mu:\varphi) \in \mathcal{L}_{n}(\mfr{g})$ be critical. Then we can simultaneously diagonalize both $\phi^{*}\phi$ and $D$ with an orthonormal basis $\{e_{i} \suchthat 1 \leqslant i \leqslant n\}$ for $\mfr{h}_{\mu}$ which is ordered in such a way that

    \begin{equation*}
    \begin{cases}
\ker \phi = span\{e_{i} \suchthat 1 \leqslant i < p \text{ or }  q < i \leqslant r \} \\
\ker D = span\{e_{i} \suchthat 1 \leqslant i \leqslant q\}
    \end{cases}
    \end{equation*}

\noindent for $1 \leqslant p \leqslant q \leqslant r \leqslant n$. Also, the base elements may be chosen in such a way that there are $\varepsilon_{i} \in \{-1,1\}$ for which 

    \begin{equation*}
    \begin{cases}
\langle \mu(e_{i},X), Y \rangle = \varepsilon_{i} \langle X,\mu(e_{i},Y) \rangle \\
\theta\varphi(e_{i}) + \varepsilon_{i} \varphi(e_{i})  \in \mfr{z}_{\mfr{g}}[\phi[\ker D]]\\
    \end{cases}
    \end{equation*}

\noindent for every $X,Y \in \ker D$ and $1 \leqslant i \leqslant q$.

Furthermore, one can latter diagonalize $\ad_{u}$ with an orthonormal basis $\{\tilde{e}_{i} \suchthat q \leqslant i \leqslant s \}$ in such a way that 

    \begin{equation*}
\tilde{e}_{i} = \frac{\phi(e_{i})}{\|\phi(e_{i})\|}, \qquad p \leqslant i \leqslant  q \text{ and } r < i \leqslant n
    \end{equation*}

    \begin{proof}
Indeed, observe that

    \begin{align*}
D\varphi^{*}\varphi & = (\varphi^{*}\varphi D)^{*} \\
& = (\varphi^{*} \ad_{u} \varphi)^{*} \\
& = \varphi^{*} \ad_{u} \varphi \\
& = \varphi^{*}\varphi D \\
    \end{align*}

\noindent which shows that $D$ and $\phi^{*}\phi$ can be simultaneously diagonalized.

Now, since $D$ is a symmetric derivation the subspaces $\mfr{r} = \ker D$ and $\mfr{r}^{\perp} = \ima D$ will be a subalgebra and ideal of $\mfr{h}_{\mu}$, respectively. Also, recall from the proof of \prop{2.5} that for every $X \in \ker D$ we have

    \begin{equation*}
(-\ad^{\mu*}_{X},\theta\phi(X)) \in \der(\mu,\phi) \qquad \forall X \in \mfr{r}
    \end{equation*}

\noindent Hence \lemm{3.3} applies and we see that $\mfr{r}$ is a reductive subalgebra which splits orthogonally as $\mfr{r}' \oplus \mfr{a}$, where $\mfr{a}$ is the center of $\mfr{r}$, and those two subspaces are $\phi^{*}\phi$-invariant. Now, on every simple factor of $\mfr{r}'$ the map $\phi^{*}\phi$ is going to be a multiple of the identity, by the same lemma, so we can choose an orthonormal basis $\{e'_{i}\}$ and $\varepsilon_{i}' \in \{-1,1\}$ such that 

    \begin{equation}
\left.\ad_{e'_{i}}^{\mu*}\right|_{\mfr{r}} = 
\varepsilon'_{i}\left.\ad_{e'_{i}}^{\mu}\right|_{\mfr{r}}
    \end{equation}

\noindent Then note that if $X \in \mfr{r}$ then

    \begin{equation*}
[\theta\phi(e'_{i}) + \varepsilon_{i}\phi(e'_{i}),\phi(X)] = -\phi \ad_{e'_{i}}^{*}X + \varepsilon_{i}\phi\ad_{e'_{i}}X = \phi \left( \varepsilon_{i}\ad_{e'_{i}} - \ad_{e'_{i}}^{*} \right)(X) = 0
    \end{equation*}

\noindent Hence it follows that 

    \begin{equation}
\theta\phi(e'_{i}) + \varepsilon_{i}\phi(e'_{i}) \in \mfr{z}_{\mfr{g}}[\phi[\mfr{r}]]
    \end{equation}

We can repeat this construction for every simple factor of $\mfr{r}'$ and then re-index the base elements to obtain an orthonormal basis for $\mfr{r}'$ which satisfies both \lastequation{1} and \lastequation{0} and diagonalizes $\phi^{*}\phi$. Subsequently, we complete this base to one for $\mfr{h}_{\mu}$ by choosing an orthonormal basis of eigenvectors for and $D$ and $\phi^{*}\phi$ on $\mfr{a} \oplus \mfr{r}^{\perp}$ and obtain an orthonormal basis $\{e_{i}\}$ as in the statement of this lemma. This finishes the first part. 

\noindent For the second claim, note that if $X$ is vector with $D(X) = cX$ then we have

    \begin{align*}
\ad_{u}\phi(X) & = \phi(DX) = c\phi(X)
    \end{align*}

\noindent Hence following the construction of the base $\{e_{i}\}$ above it is easy to construct $\{\tilde{e}_{i}\}$.
    \end{proof}
    \end{lemma}

As in the case of Lie algebras, much of the structure of a critical pair is contained in its nilradical. Indeed, in this next proposition we show that the restriction to the nilradical of a critical pair is again critical. Then just after, we will work towards reverting this process by considering nilpotent pair and adjoint to it a reductive Lie algebra.

    \begin{theorem}
Let $(\mu,\varphi) \in L_{n}(\mfr{g})$ be a critical pair. Then there exists a (orthogonal) decomposition

    \begin{equation*}
\mfr{h}_{\mu} = \mfr{m}_{\mu} \oplus \mfr{a}_{\mu} \oplus \mfr{n}_{\mu}
    \end{equation*}

\noindent where $\mfr{m}_{\mu}$ is a Levi factor, $\mfr{a}_{\mu} \oplus \mfr{n}_{\mu}$ is the radical and $\mfr{n}_{\mu}$ is the nilradical of $\mfr{h}_{\mu}$. Furthermore, the restriction $(\nu,\psi)$ of $(\mu,\phi)$ to $\mfr{n}_{\mu}$ will be a critical pair with $u_{\nu,\psi} = u_{\mu,\phi}$ and $D_{\nu,\psi}$ as the restriction of $D_{\mu,\phi}$ to $\mfr{n}_{\mu}$.

    \begin{proof}    
Let $\mfr{n}_{\mu}$ be the image of $D$, which is an ideal, and, following \lemm{3.3}, note that the subspace $\ker D$ is a reductive subalgebra so we can define $\mfr{m}_{\mu}$ and $\mfr{a}_{\mu}$ to be its derived ideal and center, respectively. Observe that those spaces decompose $\mfr{h}_{\mu}$ orthogonally and that $\mfr{a}_{\mu} \oplus \mfr{n}_{\mu}$ and $\mfr{m}_{\mu}$ are the radical and a Levi factor, respectively. 

Also, note that $\mfr{a}_{\mu}$ does not contain nilpotent elements. For if $X \in \mfr{a}_{\mu}$ is nilpotent then the element $\ad^{\mu,\phi}_{X} = \left(\ad^{\mu}_{X},\phi(X)\right)$ of $\der(\mu,\phi)$ will also be nilpotent and because by assumption we have $D(X) = 0$ it follows that $\left(-\ad_{X}^{\mu*},\theta\phi(X)\right)$ will also be a nilpotent pair derivation. It is not hard to see that this is only possible if $\ad^{\mu,\phi}_{X}=0$. On the other hand, we proved in \prop{2.5} that if $X$ is a eigenvector of $D$ with $\ad^{\mu,\phi}_{X} = 0$ then $D(X) \neq 0$. This shows that $\mfr{n}_{\mu}$ is the nilradical of $\mfr{h}_{\mu}$.

Now, take an orthonormal basis $\{e_{i}\}$ for $\mfr{h_{\mu}}$ as in \lemm{4.1}. To prove that the restriction of $(\mu,\phi)$ to $\mfr{n}_{\mu}$, which we call $(\nu,\psi)$, is critical observe that 

	\begin{equation*}
M(\nu,\psi) + k_{\mu,\phi}(\id_{\mfr{n}},0) = (D|_{\mfr{n}_{\mu}},u) - (T|_{\mfr{n}_{\mu}},v)
	\end{equation*}

\noindent where

	\begin{equation*}
(T,v) = \sum_{i=1}^{q} \left([\ad^{\mu}_{e_{i}},\ad^{\mu*}_{e_{i}}], [\theta\phi(e_{i}),\phi(e_{i})] \right)
	\end{equation*}

\noindent is a pair derivation of $(\mu,\phi)$ (compare it with the formula in \prop{2.6}). Note also that $T(X) = 0$ for every $X \in \ker D$, so then it is the case that $\left(T|_{\mfr{n}_{\mu}},0 \right)$ is a pair derivation for $(\nu,\psi)$. 

Finally, we calculate

	\begin{align*}
\|(T,v)\|^{2} & = \| (T|_{\mfr{n}_{\mu}},v) \|^{2} \\
& = -\langle M(\nu,\psi) + k(\id_{\mfr{n}_{\mu}},0) - (D|_{\mfr{n}_{\mu}},u), (T|_{\mfr{n}_{\mu}},v) \rangle \\  
& = -\langle M(\nu,\phi), (T|_{\mfr{n}_{\mu}},0) \rangle - \langle M(\mu,\phi), (T,v) \rangle \\
& = 0.
	\end{align*}

\noindent Hence $(T,v)$ is zero and $(\nu,\psi)$ is a critical pair.
	\end{proof}
    \end{theorem}
   
Following this theorem, we can say the following about the eigenvalues of the pair derivation $(D,u)_{\mu,\phi}$:

    \begin{proposition}
Let $(\mu:\varphi) \in \mathcal{L}_{n}(\mfr{g})$ be a critical pair. Then both $\tfrac{1}{k}D_{\mu,\phi}$ and $\tfrac{1}{k}\ad_{u}$ have rational eigenvalues.

    \begin{proof}
Following \theo{4.2} we may assume without loss of generality that $D_{\mu,\phi}$ has no kernel. Then take an orthonormal basis for $\mfr{h}_{\mu}$ as in \lemm{4.1} and let $V$ be the subspace of $\End(\mbb{R}^{n}\oplus \mfr{g})$ generated by the pairs $(E_{i},\tilde{E}_{i})$ where $E_{i}: \mbb{R}^{n} \to \mbb{R}^{n}$ and $\tilde{E}_{i}: \mfr{g} \to \mfr{g}$ are defined by:

    \begin{equation*}
E_{i}(X) =
    \begin{cases}
\langle X, e_{i} \rangle e_{i}, & \qquad 1 \leqslant i \leqslant r \\
\tfrac{1}{2}\langle X, e_{i} \rangle e_{i}, & \qquad r < i \leqslant n \\
0, & \qquad n < i \leqslant s \\
    \end{cases}
\qquad \text{ and } \qquad
\tilde{E}_{i}(v) =
    \begin{cases}
0, & \qquad 1 \leqslant i \leqslant r \\
\tfrac{1}{2}\langle v, \tilde{e}_{i} \rangle \tilde{e}_{i}, & \qquad r < i \leqslant n \\
\langle v, \tilde{e}_{i} \rangle \tilde{e}_{i}, & \qquad n < i \leqslant s \\
    \end{cases}
    \end{equation*}

\noindent Observe that if $(A,B) \in V$ then $B\phi = \phi A$.

Additionally, consider the following subspace of $V$:

    \begin{equation*}
F = \text{Span}\left\{ (E_{i},\tilde{E}_{i}) + (E_{j},\tilde{E}_{j}) - (E_{k},\tilde{E}_{k}) \suchthat \langle\mu(e_{i},e_{j}),e_{k} \rangle \neq 0 \text{ or } \langle[\tilde{e}_{i},\tilde{e}_{j}],\tilde{e}_{k} \rangle \neq 0\right\}
    \end{equation*}

\noindent By observing the construction of $V$ and $F$, as well as the brackets $\mu$ and of $\mfr{g}$, it is a simply computation to see that

    \begin{equation}
F^{\perp} \cap V = \{(\delta,\ad_{v}) \suchthat (\delta,v) \in \der(\mu,\phi)\} \cap V
    \end{equation}

\noindent as a particular case we have $(D,\ad_{u}) \in F^{\perp} \cap V$.

The endomorphism $I \in V$ defined by

    \begin{equation*}
I = \sum_{i=1}^{q} \left\langle (\id,0), \left( E_{i},\tilde{E}_{i} \right) \right\rangle \left(E_{i},\tilde{E}_{i}\right)
    \end{equation*}

\noindent is the result of projecting $(\id,0)$ orthogonally onto $V$. Furthermore, if $P: V \to F$ is the orthogonal projection, then: 

    \begin{equation}
\text{Both }I \text{ and } P(I) \text{ have only rational eigenvalues.}
    \end{equation}

\noindent This is so because the endomorphisms $\left( E_{i},\tilde{E}_{i}\right)$ all have only rational eigenvalues and the vector spaces $V$ and $F$ are generated by them.  

Finally, consider the following element of $V$:

    \begin{equation*}
\tilde{M} = (D,\ad_{u}) -k_{\mu,\phi}I. 
    \end{equation*}

\noindent If any pair $(\delta,\ad_{v}) \in F^{\perp} \cap V$ is given then we see that $(\delta,v) \in \der(\mu,\varphi)$ and so we apply \lemm{2.3} to show that

    \begin{align*}
\langle \tilde{M},(\delta,\ad_{v}) \rangle & = \langle (D,\ad_{u}) -k_{\mu,\phi}I,(\delta,\ad_{v}) \rangle \\
& = \langle (D,\ad_{u}) -k_{\mu,\phi}(\id,0),(\delta,\ad_{v}) \rangle \\
& = \langle M(\mu) - \varphi^{*}\varphi, \delta \rangle + \langle \ad_{u},\ad_{v} \rangle \\
& = \langle M(\mu) - \varphi^{*}\varphi, \delta \rangle + \langle u,v \rangle \\
& = \langle M(\mu,\varphi),(\delta,v) \rangle \\
& = 0.
    \end{align*}

\noindent Thus by \lastequation{1} we have $\tilde{M} \in F$. Hence, we have $\tilde{M} = -k_{\mu,\phi}P(I)$ which implies that

    \begin{equation*}
\frac{1}{k_{\mu,\phi}}(D,\ad_{u}) = I + \frac{1}{k_{\mu,\phi}}\tilde{M} = I - P(I). 
    \end{equation*}

\noindent and the result follows from \lastequation{0}.
    \end{proof}
    \end{proposition}

It follows from this previous proposition that for a critical pair $(\mu,\phi)$ there would be some common constant $c>0$ for which all the eigenvalues of $cD_{\mu,\phi}$ and $c\ad_{u_{\mu,\phi}}$ are integers. Thus if we take $\sum_{i \in \mbb{Z}} \mfr{g}^{i}$ to be the eigenspace decomposition for $c\ad_{u_{\mu,\phi}}$ we will obtain a gradation of $\mfr{g}$. Analogously, we will obtain an $\mbb{Z}_{\geqslant0}$-gradation $\sum_{i \geqslant 0}\mfr{h}_{\mu}^{i}$ for $\mfr{h}_{\mu}$ if we take the eigenspaces for $cD_{\mu,\phi}$. Finally, we note that the homomorphism $\phi$ takes $\mfr{h}_{\mu}^{i}$ into $\mfr{g}^{i}$, this follows directly from the fact that $c(D,u)_{\mu,\phi}$ is a pair derivation. This proves \theo{1.4}.   

%}

	\subsection{Extension of critical pairs} 

Now we shall work towards a converse for \theo{4.2}. In other words, we shall develop a method which takes a critical nilpotent pair and attaches to it a reductive pair in such a manner to remain critical. In fact, we shall make this construction for any pair which is itself critical and has as codomain a appropriate Lie algebra.

In the following we will need to consider the situation of pairs in $L_{n}(\mfr{r})$, where $\mfr{r}$ is an $\tilde{\theta}$-invariant subalgebra of $\mfr{gc}_{n}(\mfr{g})$ equipped with the inner product produced from $\tilde{\beta}$ and $\tilde{\theta}$ (see \eq{2.1}). In this case, without change, most of the definitions and results we encountered so far will remain true. The only exception will be the result of \theo{4.2}, which is no longer true, where we used that the adjoint representation $\ad:\mfr{g} \to \der(\mfr{g})$ is an isometry. But most importantly, the definition and considerations we made in \S2 about critical pairs remains true.

	\begin{proposition}
Let $(\mu,\varphi) \in L_{n}(\mfr{g})$ be a critical pair whose nilradical has dimension $m$. Then the pair $(\nu,\psi)$, where $\nu$ is the restriction of $\mu$ to the subalgebra $\mfr{l}_{\mu} \coloneqq \mfr{m}_{\mu} \oplus \mfr{a}_{\mu}$ and $\psi$ is the homomorphism $\mfr{l}_{\mu} \to \mfr{gc}_{m}(\mfr{g})$ given by

	\begin{equation*}
\psi(X) = \left( \left.\ad^{\mu}_{X}\right|_{\mfr{n}_{\mu}},\phi(X) \right)
	\end{equation*} 

\noindent is a critical pair in $L_{m}(\mfr{gc}_{n}(\mfr{g}))$ with minimal energy and such that $k_{\nu,\psi} = k_{\mu,\phi}$.

	\begin{proof}
From \prop{2.6} we may compare the moment maps for $\mu$ and $\nu$. To do so take an orthonormal basis $\{e_{i}\}$ for $\mfr{n}_{\mu}$ and note that:

	\begin{equation}
\langle M(\nu)X,Y \rangle = \langle M(\mu)X,Y \rangle + \sum_{i=1}^{m} \langle \mu(X,e_{i}),\mu(Y,e_{i}) \rangle
	\end{equation}

\noindent for $X,Y \in \mfr{l}_{\mu}$. However, the inner product on $\mfr{gc}_{m}(\mfr{g})$ is such that

	\begin{equation*}
\langle \psi(X),\psi(Y) \rangle = \sum_{i=1}^{m} \langle \mu(X,e_{i}),\mu(Y,e_{i}) \rangle + \langle \phi(X),\phi(Y) \rangle.
	\end{equation*}

\noindent Hence \lastequation{0} could be rewritten as
	
	\begin{equation*}
\langle M(\nu)X,Y \rangle - \langle \psi(X),\psi(Y) \rangle = \langle M(\mu)X,Y \rangle - \langle \phi(X),\phi(Y) \rangle
	\end{equation*}

\noindent from which we conclude, because $\mfr{l}_{\mu}$ is the kernel of $D_{\mu,\phi}$, that

	\begin{equation*}
M(\nu) - \psi^{*}\psi + k_{\mu,\phi}\id = 0
	\end{equation*}

\noindent On the other hand, it should be clear from \lemm{4.1}, that

	\begin{equation*}
u_{\nu,\psi} = 0.
	\end{equation*}

\noindent Therefore, putting everything together, we obtain

	\begin{equation*}
M(\nu,\psi) + k_{\mu,\phi}(\id,0) = 0.
	\end{equation*}

\noindent which proves the assertion that $(\nu,\psi)$ is a global minimum for the energy.
	\end{proof}
	\end{proposition}

Hence if we want to revert \theo{4.2} we should look at pairs in $L_{m}(\mfr{gc}_{n}(\mfr{g}))$ whose image are pair derivations for $(\mu,\phi) \in L_{n}(\mfr{g})$. Indeed, we look at the subalgebra

	\begin{equation*}
r_{\mu,\phi} \coloneqq \der(\mu,\phi) \cap \tilde{\theta}[\der(\mu,\phi)]
	\end{equation*}

\noindent which is the largest $\tilde{\theta}$-invariant subalgebra of $\der(\mu,\phi)$. Naturally, $r_{\mu,\phi}$ is always reductive and, in fact, if we take the projection $\pi: \mfr{r}_{\mu,\phi} \to \mfr{g}$ the corresponding pair will be a critical pair in some $L_{n'}(\mfr{g})$. Furthermore, if the pair $(\mu,\phi)$ then Corollary 9.2 in \cite{bohm2017realgeometricinvarianttheory} show that each element in $\mfr{r}_{\mu,\phi}$ centralizes $m(\mu,\phi)$ and, consequently, we should have
	
	\begin{equation}
\qquad [(\delta,v),(D,u)_{\mu,\phi}] = 0, \qquad \forall (\delta,v) \in \mfr{r}_{\mu,\phi}
	\end{equation}

We now look at the pairs in $L_{m}(\mfr{r}_{\mu,\phi})$ and use them to extend $(\mu,\phi)$. Indeed, to each pair $(\nu,\psi) \in L_{m}(\mfr{r}_{\mu,\phi})$ we will produce a new pair in $L_{m+n}(\mfr{g})$ by defining

	\begin{equation*}
\tilde{\nu}((X,A),(Y,B)) = (\nu(X,Y),\mu(A,B) + (\delta \circ \psi)(X)(B) -  (\delta \circ \psi)(Y)(A)),
	\end{equation*}
	
\noindent where $\delta: \der(\mu,\phi) \to \der(\mu)$ is the projection, and 

	\begin{equation*}
\tilde{\psi}(X,A) = (\pi \circ \psi)(X) + \phi(A)
	\end{equation*}

\noindent Then the result $(\nu,\psi) \ltimes (\mu,\phi) \coloneqq (\tilde{\nu},\tilde{\phi})$ will be a pair in $L_{m+n}(\mfr{g})$ which has the structure of the orthogonal semi-direct product $\mfr{h}_{\nu} \ltimes \mfr{h}_{\mu}$.

	\begin{theorem}
Let $(\mu,\phi) \in L_{n}(\mfr{g})$ be a nilpotent critical pair. Then to each other critical pair $(\nu,\psi) \in L_{m}(\mfr{r}_{\mu,\phi})$ the semi-direct product $(\tilde{\nu},\tilde{\psi}) \coloneqq (\nu,\psi) \ltimes (\mu,\phi)$ will be a critical pair in $L_{n}(\mfr{g})$, provided we make a normalization to have $k_{\nu,\psi} = k_{\mu,\phi}$.

	\begin{proof}

Let $k$ be the common value of $k_{\mu,\phi}$ and $k_{\nu,\psi}$. Then note that

	\begin{equation}
u_{\tilde{\nu},\tilde{\psi}} = \pi(u_{\nu,\psi}) + u_{\mu,\phi}.
	\end{equation}
	
\noindent And we need to find a similar formula for $D_{\tilde{\nu},\tilde{\psi}}$.

Since $\mfr{h}_{\nu}$ and $\mfr{h}_{\mu}$ are orthogonal subspaces of $\mfr{h}_{\tilde{\nu}}$ we may find for the latter an orthonormal basis $\{e_{i}, \dots, e_{m+n}\}$ for which $\{e_{1}, \dots, e_{m}\}$ and $\{e_{m+1}, \dots, e_{m+m}\}$ span $\mfr{h}_{\nu}$ and $\mfr{h}_{\mu}$, respectively. Then we may find that the moment map for $\tilde{\nu}$ decomposes as

	\begin{align*}
\langle M(\tilde{\nu})(X,A),(Y,B) \rangle & = \langle M(\nu)X,Y \rangle + \langle M(\mu)A,B \rangle \\
& + \sum_{i=1}^{m} \langle [\delta(\psi(e_{i})),\delta(\psi(e_{i}))^{*}]A,B \rangle - \sum_{i=m+1}^{n} \langle \delta(\psi(X))e_{i},\delta(\psi(Y))e_{i} \rangle
	\end{align*} 

But then we note that

	\begin{equation*}
\sum_{i=1}^{m} [\delta(\psi(e_{i})),\delta(\psi(e_{i}))^{*}] = \delta(u_{\nu,\psi})
	\end{equation*}

\noindent and 

	\begin{equation*}
\sum_{i=m+1}^{n} \langle \delta(\psi(X))e_{i},\delta(\psi(Y))e_{i} \rangle = \langle \psi(X),\psi(Y) \rangle - \langle \tilde{\psi}(X),\tilde{\psi}(Y) \rangle.
	\end{equation*}
	
\noindent Hence

	\begin{align*}
\langle M(\tilde{\nu})(X,A),(Y,B) \rangle & = \langle (\psi^{*}\psi+k\id+D_{\nu,\psi})X,Y \rangle + \langle (\phi^{*}\phi + k\id+\delta(u_{\nu,\psi})+D_{\mu,\phi})A,B \rangle \\
& + \langle \tilde{\psi}(X),\tilde{\psi}(Y) \rangle - \langle \psi(X),\psi(Y) \rangle \\
& = \langle (k\id+D_{\nu,\psi})X,Y \rangle + \langle (k\id+\delta(u_{\nu,\psi})+D_{\mu,\phi})A,B \rangle \\
& + \langle \tilde{\psi}(X),\tilde{\psi}(Y) \rangle + \langle \phi(A),\phi(B) \rangle \\
& = \langle (k\id+D_{\nu,\psi})X,Y \rangle + \langle (k\id+\delta(u_{\nu,\psi})+D_{\mu,\phi})A,B \rangle \\
& + \langle \tilde{\psi}(X,A),\tilde{\psi}(Y,B) \rangle
	\end{align*}
	
\noindent And then from definition we conclude that

	\begin{equation*}
D_{\tilde{\nu},\tilde{\psi}} = D_{\nu,\psi} \oplus \left(\delta(u_{\nu,\psi})+D_{\mu,\phi}\right)
	\end{equation*}

To finish we just need to verify that $(D,u)_{\tilde{\nu},\tilde{\psi}}$ is a pair derivation. To do so we need to verify two conditions:

	\begin{itemize}
	
	\item The endomorphism $D_{\tilde{\nu},\tilde{\psi}}$ is a derivation of $\mfr{h}_{\tilde{\nu}}$. To verify this condition note it is enough to calculate $D_{\tilde{\nu},\tilde{\psi}}\tilde{\nu}(X,A)$ for $X \in \mfr{h}_{\nu}$ and $A \in \mfr{h}_{\mu}$. In this case we have
	
		\begin{align*}
	D_{\tilde{\nu},\tilde{\psi}}\tilde{\nu}(X,A) & = \left( \delta(u_{\nu,\psi}) + D_{\mu,\phi} \right)\delta(\psi(X))A \\
	& = \left( \delta[u_{\nu,\psi},\psi(X)] + \delta(\psi(X))\delta(u_{\nu,\psi}) + D_{\mu,\phi}\delta(\psi(X)) \right) A
		\end{align*}
		
	\noindent But then from \lastequation{0} we have $D_{\mu,\phi}\delta(\psi(X)) = \delta(\psi(X))D_{\mu,\phi}$, thus
	
		\begin{align*}
	D_{\tilde{\nu},\tilde{\psi}}\tilde{\nu}(X,A) & = \left( \delta[u_{\nu,\psi},\psi(X)] + \delta(\psi(X))\delta(u_{\nu,\psi}) + \delta(\psi(X))D_{\mu,\phi} \right) A \\
	& = \left( \delta\psi(D_{\nu,\psi}X) + \delta(\psi(X))\left( \delta(u_{\nu,\psi}) + D_{\mu,\phi} \right) \right)A \\
	& = \tilde{\nu}(D_{\tilde{\nu},\tilde{\psi}}X,A) + \tilde{\nu}(X,D_{\tilde{\nu},\tilde{\psi}}A)
		\end{align*}¨
		
	\item The homomorphism $\tilde{\phi}$ interchanges $D_{\tilde{\nu},\tilde{\psi}}$ and $[u_{\tilde{\nu},\tilde{\psi}},-]$. For this we apply \lastequation{0} again:

		\begin{align*}
	\tilde{\psi}D_{\tilde{\nu},\tilde{\psi}} (X,A) & = \tilde{\psi}(D_{\nu,\psi} X, (\delta(u_{\nu,\psi})+D_{\mu,\phi})A) \\
	& = \pi(\psi D_{\nu,\psi} X) + \phi(\delta(u_{\nu,\psi})A) + \phi(D_{\mu,\phi}A)  \\
	& = \pi[u_{\nu,\psi},\psi(X)] + [\pi(u_{\nu,\psi}),\phi(A)] + [u_{\mu,\phi},\phi(A)] \\
	& = \pi[u_{\nu,\psi}+(D,u)_{\mu,\phi},\psi(X)] + [\pi(u_{\nu,\psi}),\phi(A)] + [u_{\mu,\phi},\phi(A)] \\
	& = [u_{\tilde{\nu},\tilde{\psi}},\tilde{\psi}(X,A)],
		\end{align*}

	\end{itemize}

\noindent Thus $(D,u)_{\tilde{\nu},\tilde{\psi}}$ is a pair derivation for $(\tilde{\nu},\tilde{\psi})$ and the theorem follows. 
	\end{proof} 
	\end{theorem}

%%% --------- %%%}
%%% Appendix %%%{
\appendix

%    \printbibliography[title={Bibliography}]

 \bibliographystyle{alpha}
 \bibliography{Bibliography}
    
%%% --------- %%%}

\end{document}